\DeclareMathOperator*{\argmax}{arg\,max}
\newenvironment{mat}{\left[\begin{array}{ccccccccccccccc}}{\end{array}\right]}
\newcommand\bcm{\begin{mat}}
\newcommand\ecm{\end{mat}}
\crefname{hypothesis}{Hypothesis}{Hypotheses}
\title{Analysis of Truncated Orthogonal Iteration for Sparse Eigenvector Problems\thanks{Submitted to the editors DATE.
\funding{This work was funded by }}}
\author{Hexuan Liu\thanks{Department of Applied Mathematics, University of Washington, Seattle, WA
  (\email{hl65@uw.edu}, \email{saravkin@uw.edu}).}
\and Aleksandr Aravkin\footnotemark[2]}
\DeclareMathOperator{\diag}{diag}
\begin{document}

\maketitle

\begin{abstract}
A wide range of problems in computational science and engineering 
require estimation of sparse eigenvectors for high dimensional systems. 
Here, we propose two variants of the Truncated Orthogonal Iteration to compute multiple leading eigenvectors with sparsity constraints simultaneously. We establish numerical convergence results for the proposed algorithms using a perturbation framework, and extend our analysis to other existing alternatives for sparse eigenvector estimation. We then apply our algorithms to solve the sparse principle component analysis problem for a wide range of test datasets, from simple simulations to real-world datasets including MNIST, sea surface temperature and 20 newsgroups. In all these cases, we show that the new methods get state of the art results quickly and with minimal parameter tuning. 

\end{abstract}

\begin{keywords}
  Orthogonal Iteration, Eigenvalue problem, Sparsity, Sparse PCA
\end{keywords}

\begin{AMS}
  15A18, 65F15
\end{AMS}

\section{Introduction}
Sparse eigenvector problems arise in many applications where localized and structured eigenvectors are desired, such as sparse principal component analysis (sparse PCA), sparse dictionary learning and densest $k-$subgraphs recovery. In sparse PCA, sparse loading vectors have better interpretability, since each principal component is a linear combination of only a few of the original features. The goal of sparse coding/dictionary learning is to represent the input signal as a sparse linear combination of the dictionary elements. The densest $k-$subgraph can also be formulated as a sparse eigenvector problem~\cite{yuan2013truncated} to find a set of $k$ vertices with maximum average degree in the subgraph induced by the set.
To formalize the problem, we assume that the leading eigenvectors corresponding to the largest $m$ eigenvalues of a positive semidefinite matrix $\bar{A}$ are sparse, i.e.
\begin{equation}\label{eq:sparse}
    \bar{P} = \argmax_{P^TP=I_m}\text{Tr}(P^T\bar{A}P), \quad \bar{P} \text{ is sparse.}
\end{equation}
Here, {\it sparse} means that each column of $P$ has a lot of entries that are either exactly zero or sufficiently close to zero. In practice, $\bar{A}$ is often unknown and we are given a perturbed positive semidefinite matrix $A$: $A=\bar{A}+E$. Our goal is to recover the true eigenvectors $\bar{P}$ from $A$. 

A straightforward formulation for the sparse eigenvector problem given $A$ is as follows:
\begin{equation}\label{eq:original}
    \bar{Q}=\argmax_{Q^TQ=I_m}\text{Tr}(Q^T{A}Q),  \quad \text{subject to }\|q_i\|_0\leq k_i , ~i=1, \ldots , m,
\end{equation}
where $q_i$ is the $i^{th}$ column vector of $Q$ and $\|\cdot\|_0$ denotes the $\ell_0$ ``norm" which is the number of nonzeros in a vector. To find a solution to~\cref{eq:original} is an NP-hard problem. Moreover, the solution for~\cref{eq:original} is not a good approximation for~\cref{eq:sparse} unless some additional assumptions are imposed on $E$, e.g. when $E=\sigma^2 I$. In this work we do not impose additional assumptions on $A$ and do not aim to solve~\cref{eq:original}. We focus instead on the orthogonal iteration for solving the standard eigenvalue problems, and present two algorithms. In the first algorithm, we relax the sparsity constraint, while in the second, we relax the orthogonality constraint. We analyze whether truncation at each iteration yields a better approximation to the true eigenvectors than those of the standard orthogonal iteration.

Many existing algorithms for sparse eigenvector recovery focus on recovering a single eigenvector and then using a deflation scheme to generalize to multiple components~\cite{yuan2013truncated, d2008optimal, shen2008sparse, jolliffe1989rotation, zou2006sparse}. The downside of this approach is that deflation adds extra perturbation error to the original problem, with estimates of latter components accumulating errors from each deflation. The deflation step itself can also be a computational bottleneck. When several leading eigenvalues are clustered, it is also difficult to identify the corresponding eigenvectors, and sometimes a subspace is preferred over individual eigenvectors. To avoid these issues, we use the orthogonal iteration, which is a block generalization of the power method and outputs an orthonormal basis of the subspace spanned by the leading eigenvectors.

One of the most widely used applications of sparse eigenvectors is sparse PCA, where $A$ is the empirical covariance matrix and $\bar{A}$ is the true covariance matrix. PCA~\cite{pearson1901liii, hotelling1933analysis, Jolliffe:1986} is one of the most widely used dimensionality reduction techniques. It is computed by doing an eigendecomposition on the sample covariance matrix, and finds a sequence of orthogonal vectors that estimate principal directions of the data variance. In high-dimensional settings, classical PCA suffers from inconsistency~\cite{johnstone2009consistency} and poor interpretability~\cite{d2005direct}, and over the last two decades many algorithms and theory for sparse PCA have been proposed to mitigate these issues, see e.g. the survey~\cite{zou2018selective}. In this paper we analyze the problem~\cref{eq:sparse} in the general sparse eigenvector setting and then focus on the sparse PCA application.

Another line of relevant work focuses on finding the row-sparse principal subspace, assuming multiple eigenvectors share the same sparsity pattern (or ``support set")~\cite{vu2013minimax, wang2020upper, tian2020learning}. This is equivalent to first selecting a sparse subset of features (an NP-hard problem~\cite{tian2020learning}) and then applying PCA. A practical limitation of the row-sparse formulation is that we cannot always assume the same support set across all leading eigenvectors of interest. For example, in face recognition tasks~\cite{jenatton2010structured}, brain imaging~\cite{de2017structured}, and natural language processing~\cite{zhang2012sparse}, the goal is often to find different localized and interpretable patterns in different eigenvectors. Here, we consider the general problem of column-sparse subspace estimation, without assuming a common support set.


{\bf Contributions.}
We propose a general framework for estimating sparse eigenvectors and differentiate between the deflation scheme and block scheme. We then develop two new algorithms based on the orthogonal iteration to obtain several leading eigenvectors with sparsity constraints. We provide a deterministic convergence analysis for methods within the block framework, without additional assumptions on the matrix $A$, and extend the analysis to other sparse eigenvector algorithms. Finally, we demonstrate the accuracy and efficiency of the proposed algorithms by applying them to simulated and real-world datasets, including the pitprops, sea surface temperature, MNIST, and 20 newsgroup datasets.  



\vspace{0.2cm}
\textbf{Notation:} Let $\mathbb{S}^p=\{A\in \mathbb{R}^{p\times p}|A=A^T\}$ denote the set of symmetric matrices. For any $A\in\mathbb{S}^p$, we denote its eigenvalues by $\lambda_{\min}(A)=\lambda_p(A)\leq \cdots \leq \lambda_1(A)=\lambda_{\max}(A)$.  We use $\rho(A)$ and $\|A\|_2$ to denote the spectral norm of $A$, which is $\max\{|\lambda_{\max}(A)|, |\lambda_{\min}(A)|\}$. For vectors, $\| \cdot \|_2$ or $\| \cdot \|$ will denote the 2-norm, while $\| \cdot \|_0$ denotes the $\ell_0$ ``norm" which is the number of nonzeros in a vector. Let $\lambda_{\max} (A, k)=\max_{x\in \mathbb{R}^p} x^TAx$, such that $\|x\|=1$, $\|x\|_0\leq k$. Define $\rho(E, k):=\sqrt{\lambda_{\max}(E^TE, k)}$. We use $\|A\|_F$ to denote the Frobenius norm of $A$.

\section{Methodology}
\label{sec:method}

Many methods for finding an approximate solution to \cref{eq:original} use a power iteration-like scheme, either using deflation to obtain the leading vectors sequentially~\cite{d2008optimal,yuan2013truncated}, or using block approaches to compute several vectors at once~\cite{journee2010generalized,ma2013sparse,erichson2020sparse}. The deflation approach works best if the eigenvectors of interest correspond to eigenvalues that are all separated by large gaps compared to the remaining eigenvalues. If several leading eigenvectors are desired and their corresponding eigenvalues are clustered, the block approach is preferable since it recovers a principal subspace instead of identifying individual eigenvectors separately.

For the single vector case, power iteration-based methods iterate the following steps:

\begin{enumerate}
    \item Update the current vector: $\Tilde{v}_{t+1}=Av_t$.
    \item Truncate or threshold the vector based on some penalty, usually $\ell_1$ or $\ell_0$.
    \item Re-normalize the vector: $v_{t+1}=\frac{\hat{v}_{t+1}}{\|\hat{v}_{t+1}\|}$.
\end{enumerate}

To recover several eigenvectors at once, one natural extension of the truncated power method is the truncated orthogonal iteration, see e.g. ITSPCA~\cite{ma2013sparse}. However, it cannot enforce orthogonality and sparsity at the same time. Performing an orthogonalization step (by computing a QR factorization or a singular value decomposition)  
is crucial to the algorithm stability, but destroys the sparsity pattern. On the other hand, performing truncation afterwards gives a sparse solution, but loses orthogonality. We propose the following framework based on the orthogonal iteration, where a post-processing step may be used to enforce sparsity. 

\begin{enumerate}
    \item Update the current vectors: $Q'_{t+1}=AQ_t$.
    \item Truncate or threshold (usually process each column of $Q'_{t+1}$ separately).
    \item Re-orthogonalize: QR: $Q_{t+1}=\textbf{qr}(Q'_{t+1})$. SVD: $U, S, V=\textbf{svd}(Q'_{t+1})$, $Q_{t+1}=UV^T$.
    \item (Optional) Post-processing: in each iteration, truncate or threshold $Q_{t+1}$.  
\end{enumerate}

We propose two variations of the Truncated Orthogonal Iteration in this framework. The first approach, formalized in~\cref{alg:TOrth}, is similar to the ITSPCA algorithm~\cite{ma2013sparse}, but with key differences in implementation and analysis. First, we replace the thresholding step with truncation, as discussed in~\Cref{sec: relations}. Second, we give a deterministic numerical analysis in~\cref{thm:thm1}, while for ITSPCA, \cite{ma2013sparse, cai2020optimal} established statistical convergence analyses under the spiked covariance model~\cite{johnstone2001distribution} and did not analyze the numerical convergence of the algorithm. Third, we use a different initialization scheme, i.e. warm initialization as discussed in~\Cref{sec:experiments}, as opposed to the ``diagonal thresholding'' initialization~\cite{johnstone2009consistency}, which also relies on the spiked covariance model and requires extra parameters. Our second approach, formalized in~\cref{alg:OrthT}, uses a greedy approach to get sparse vectors after each iteration of \cref{alg:TOrth}. 


\begin{algorithm}[H]
\caption{Truncated Orthogonal Iteration (TOrth)}\label{alg:TOrth}
\begin{algorithmic}
\STATE {Input: Symmetric positive semidefinite matrix $A\in\mathbb{S}^{p}$, initial vectors $Q_0\in \mathbb{R}^{p\times m}$}
\STATE {Output: An orthogonal (but possibly dense) matrix $Q_t$}
\STATE {Parameters: Cardinalities for each column vector $K=[k_1, \cdots, k_m]$}
\REPEAT
\STATE{Compute $P_t=AQ_{t-1}$}. Denote the $i^{th}$ column of $P_t$ as $p_i$.
\FOR{$i=1, \cdots, m$}
    \STATE{Let ${F}_i = \text{supp}(p_i, k_i)$ be the indices of $p_i$ with the largest $k_i$ absolute values.}
    \STATE{Compute $\hat{p}_i = \text{Truncate}(p_i, {F}_i)$.}
    \STATE{$\hat{P}_t[:,i]=\hat{p}_i$.}
\ENDFOR
\STATE{Reorthogonalize $Q_t = \textbf{qr}(\hat{P}_t)$.}
\STATE{$t\leftarrow t+1$}
\UNTIL{Convergence}
\end{algorithmic}
\end{algorithm}

\textbf{Complexity Analysis.}
For sparse PCA problems, suppose that we are given a data matrix $X\in \mathbb{R}^{n\times p}$. The covariance matrix is calculated by $\Sigma=X^TX$. In high-dimensional settings, $p\gg n\geq m$. In each iteration, matrix-matrix multiplication is $\mathcal{O}(npm)$. Sorting $m$ vectors of length $p$ in order to identify the largest entries is $\mathcal{O}(mp\log p)$. Since the dimension of matrix $\hat{P}_t$ is $p\times m$, QR factorization is $\mathcal{O}(pm^2)$. Compared to the single vector case, QR factorization is more expensive than normalizing $m$ single vectors (requiring $\mathcal{O}(mp)$ operations), but the deflation step can be avoided (saving $\mathcal{O}(np)$ operations). 

\section{Analysis}
\label{sec:convergence}

In this section, we analyze what happens in each iteration of \cref{alg:TOrth} when the matrix is perturbed and a truncation step is performed.

\subsection{Preliminaries.} 
We use the standard $\sin\Theta$ definition \cite{stewart1990matrix, van1983matrix} to measure the distance between subspaces:
\begin{definition}
Let $\mathcal{X}$ and $\mathcal{Y}$ be two $m$-dimensional subspaces of $\mathbb{R}^p$. Let the columns of $X$ form an orthonormal basis for $\mathcal{X}$ and the columns of $Y$ form an orthonormal basis for $\mathcal{Y}$.
We use $\|\sin\Theta(\mathcal{X},\mathcal{Y})\|_F$ to measure the distance between  
$\mathcal{X}$ and $\mathcal{Y}$,
where
\begin{equation}\label{eq:mat-angles-XY}
\Theta(\mathcal{X},\mathcal{Y})=\text{diag}(\theta_1(\mathcal{X},\mathcal{Y}),\ldots,\theta_m(\mathcal{X},\mathcal{Y})).
\end{equation}
Here, $\theta_j(\mathcal{X},\mathcal{Y})$'s denote the {\em canonical angles} between $\mathcal{X}$ and $\mathcal{Y}$ [p. 43]\cite{stewart1990matrix},
which is defined as
\begin{equation}\label{eq:indv-angles-XY}
0\le\theta_j(\mathcal{X},\mathcal{Y})\triangleq\arccos\sigma_j\le\frac {\pi}2\quad\mbox{for $1\le j\le m$},
\end{equation}
where $\sigma_j$'s are the singular values of $X^T Y$. 
Note that this definition is independent of which orthonormal bases $X$ and $Y$ are chosen for the spaces $\mathcal{X}$ and $\mathcal{Y}$.
\end{definition}
For the ease of notation, we use $\Theta(X,Y)=\Theta(\mathcal{X}, \mathcal{Y})$, where $X$, $Y$ are the orthonormal bases for the subspaces $\mathcal{X}$, $\mathcal{Y}$, respectively. It has been shown~\cite{stewart1990matrix, van1983matrix} that the following relations holds:
\begin{align}
    &\|\cos\Theta (X, Y)\|_{ui} = \|X^TY\|_{ui},\\
    &\|\sin\Theta (X, Y)\|_{ui} =\|{X^{\perp}}^TY\|_{ui}=\|X^TY^{\perp}\|_{ui},\\
    &\|\sin\Theta(X,Y)\|_2=\|XX^T-YY^T\|_2,\\
    &\|\sin\Theta(X,Y)\|_F=\frac{1}{\sqrt{2}}\|XX^T-YY^T\|_F=\sqrt{p-\|X^TY\|_F^2}.
\end{align}
where $\|\cdot\|_{ui}$ denotes any unitary invariant norm such as the 2-norm and the Frobenius norm. $X^{\perp}$ denotes the orthogonal complement of $X$.

Throughout the paper, we use the following well-known properties of matrix norms:
\begin{align}
    & \|A\|_2\leq \|A\|_F\leq \text{rank}(A)\|A\|_2, \label{eq:F2norm_relations}\\
    & \|AB\|_2\leq \|A\|_2 \|B\|_2,\\
    & \|AB\|_F \leq \|A\|_F\|B\|_2. \label{eq:Fnorm_ineq}
\end{align}

\subsection{Convergence Analysis}
We now establish our main result and key consequences.
\begin{theorem}\label{thm:thm1}
 Let $P$ be the matrix of eigenvectors corresponding to the $m$ largest eigenvalues of $\bar{A}$, with $\lambda_1(\bar{A})\geq \lambda_2(\bar{A})\geq \cdots \geq \lambda_m(\bar{A})>\lambda_{m+1}(\bar{A})>0$. Let $A = \bar{A} + E$. Assume $\lambda_1(\bar{A})=1$. Define $\gamma :=\frac{\lambda_{m+1}}{\lambda_{m}}<1$. Let $Q_t$ be the matrix obtained at iteration $t$ by \cref{alg:TOrth}. Then 
 \begin{equation}
     \|P^TQ_t\|_2^2 \geq\frac{\|P^TQ_{t-1}\|_F^2}{(1-\gamma^2)\|P^TQ_{t-1}\|_F^2+m\gamma^2} - \delta_E-\delta_{\text{Truncate}},
 \end{equation}
 where
 \[\delta_E = \frac{4\rho(E,K)}{\lambda_{m}^2(1-\|\sin\Theta(P, Q_{t-1})\|_2^2)}, \quad \delta_{\text{Truncate}}=2m\sqrt{\frac{\min\{\bar{k}_{\max}, p-k_{\min}\}}{p}}.\]
 \[\rho(E, K)=\max_{Q^TQ=I_m}\|EQ\|_2 \text{ subject to }\|q_i\|_0\leq k_i, \]
 \[\bar{k}_{\max}=\max_i\{\bar{k}_i\}=\max_i\{\|p_i\|_0\}, \ k_{\min}=\min_i\{k_i\}. \]
Assume that $\|P^TQ_t\|_F^2=c\|P^TQ_t\|_2^2$, where $c \in [1,m]$. Then we have:
\begin{equation}
    \|\sin\Theta (P, Q_{t})\|_F^2 \leq \frac{\gamma^2\|\sin\Theta (P, Q_{t-1})\|_F^2+\frac{m-c}{m}\|P^TQ_{t-1}\|_F^2}{1-(1-\gamma^2)\|\sin\Theta(P, Q_{t-1})\|_2^2} + c\delta_E + c\delta_{\text{Truncate}}.
\end{equation}
\end{theorem}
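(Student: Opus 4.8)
The plan is to split one iteration of \cref{alg:TOrth} into three pieces — an exact, noiseless, untruncated orthogonal‑iteration step, the additive noise $E$, and the columnwise truncation — bound the last two, and then obtain the $\sin\Theta$ estimate as a purely algebraic consequence of the first inequality.

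\textbf{The noiseless, exact step.} Write $\bar A = P\Lambda_1 P^T + P^{\perp}\Lambda_2 (P^{\perp})^T$ with $\Lambda_1=\diag(\lambda_1(\bar A),\dots,\lambda_m(\bar A))$, $\Lambda_2=\diag(\lambda_{m+1}(\bar A),\dots)$, and set $G=P^TQ_{t-1}$, $H=(P^{\perp})^TQ_{t-1}$, so $G^TG+H^TH=I_m$ and the squared singular values of $G$ are $\cos^2\theta_j(P,Q_{t-1})$. For the idealized update $\bar Q_t=\textbf{qr}(\bar A Q_{t-1})$ one has $P^T\bar Q_t=\Lambda_1 G R^{-1}$ with $R^TR=G^T\Lambda_1^2 G+H^T\Lambda_2^2 H$, hence
\[
\|P^T\bar Q_t\|_2^2=\max_{x\neq 0}\frac{x^TG^T\Lambda_1^2 Gx}{x^T\bigl(G^T\Lambda_1^2 G+H^T\Lambda_2^2 H\bigr)x}.
\]
Using $\Lambda_1^2\succeq\lambda_m^2 I$, $\Lambda_2^2\preceq\lambda_{m+1}^2 I$, the monotonicity of $a\mapsto a/(a+b)$ in $a$ and in $b$, and the fact that $G^TG$ and $H^TH$ commute (so the Rayleigh quotient $x^TH^THx/x^TG^TGx$ is minimized at the dominant common eigenvector), the displayed quantity is bounded below by $\phi\bigl(\|P^TQ_{t-1}\|_2^2\bigr)$, where $\phi(z)=z/\bigl(\gamma^2+(1-\gamma^2)z\bigr)$ is increasing on $[0,1]$. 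Since $\|P^TQ_{t-1}\|_2^2\ge\tfrac1m\|P^TQ_{t-1}\|_F^2$, monotonicity of $\phi$ gives exactly the leading term $\|P^TQ_{t-1}\|_F^2\big/\bigl((1-\gamma^2)\|P^TQ_{t-1}\|_F^2+m\gamma^2\bigr)$.

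\textbf{Noise and truncation.} Columnwise, the truncated matrix $\hat P_t$ has $\hat p_i$ equal to $(AQ_{t-1})[:,i]$ restricted to its $k_i$ largest‑magnitude entries, so $\hat P_t=\bar A Q_{t-1}+\Delta_E+\Delta_T$, where $\Delta_E$ stacks the restrictions $(EQ_{t-1}[:,i])|_{F_i}$ and $\Delta_T$ stacks the discarded parts $-(\bar A Q_{t-1}[:,i])|_{F_i^c}$. Because the $i$‑th kept support has size $k_i$, each column of $\Delta_E$ has norm at most $\rho(E,k_i)$, giving $\|\Delta_E\|_2=\mathcal O(\rho(E,K))$; and a worst‑case bound on the norm of the smallest discarded entries of each column produces the factor $\sqrt{\min\{\bar k_{\max},p-k_{\min}\}/p}$, the $2m$ coming from the $m$ columns. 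Moreover $\sigma_{\min}(\bar A Q_{t-1})\ge\lambda_m\sqrt{1-\|\sin\Theta(P,Q_{t-1})\|_2^2}$ (from $\bar A Q_{t-1}x=P\Lambda_1 Gx+P^{\perp}\Lambda_2 Hx$ and $\lambda_{\min}(G^TG)=1-\|\sin\Theta(P,Q_{t-1})\|_2^2$), so a standard subspace perturbation estimate controls $\|\sin\Theta(\mathrm{range}(\hat P_t),\mathrm{range}(\bar A Q_{t-1}))\|_2$ by an amount of order $\|\Delta_E+\Delta_T\|_2/\sigma_{\min}(\bar A Q_{t-1})$. Feeding this into the elementary inequality
\[
\bigl|\,\|P^TQ_t\|_2-\|P^T\bar Q_t\|_2\,\bigr|\le\|Q_tQ_t^T-\bar Q_t\bar Q_t^T\|_2=\|\sin\Theta(Q_t,\bar Q_t)\|_2
\]
and squaring (using $\|P^T\bar Q_t\|_2\le1$, and $\tfrac1{\lambda_m\sqrt{1-\|\sin\Theta(P,Q_{t-1})\|_2^2}}\le\tfrac1{\lambda_m^2(1-\|\sin\Theta(P,Q_{t-1})\|_2^2)}$ to reach the stated denominators) yields $\|P^TQ_t\|_2^2\ge\|P^T\bar Q_t\|_2^2-\delta_E-\delta_{\mathrm{Truncate}}$, which with the previous paragraph proves the first inequality.

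\textbf{The $\sin\Theta$ estimate and the main obstacle.} With $s:=\|\sin\Theta(P,Q_{t-1})\|_F^2=m-\|P^TQ_{t-1}\|_F^2$ and the identity $\|\sin\Theta(P,Q_t)\|_F^2=m-\|P^TQ_t\|_F^2=m-c\,\|P^TQ_t\|_2^2$, the first inequality gives $\|\sin\Theta(P,Q_t)\|_F^2\le m-\tfrac{c\,\|P^TQ_{t-1}\|_F^2}{(1-\gamma^2)\|P^TQ_{t-1}\|_F^2+m\gamma^2}+c\,\delta_E+c\,\delta_{\mathrm{Truncate}}$. One then verifies the identity $(1-\gamma^2)\|P^TQ_{t-1}\|_F^2+m\gamma^2=m-(1-\gamma^2)s$, that $m-\tfrac{c\,\|P^TQ_{t-1}\|_F^2}{m-(1-\gamma^2)s}$ equals $\tfrac{m(m-c)-s\,(m(1-\gamma^2)-c)}{m-(1-\gamma^2)s}$, and that $\gamma^2 s+\tfrac{m-c}{m}\|P^TQ_{t-1}\|_F^2$ equals $\tfrac1m$ times the same (nonnegative) numerator; replacing $m-(1-\gamma^2)s$ in the denominator by the smaller quantity $m\bigl(1-(1-\gamma^2)\|\sin\Theta(P,Q_{t-1})\|_2^2\bigr)$ — valid since $s\le m\|\sin\Theta(P,Q_{t-1})\|_2^2$ — turns the bound into exactly the claimed form. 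The noiseless step and this last paragraph are clean (a min–max over quadratic forms, then elementary algebra); the main obstacle is the middle step — carrying the two distinct perturbations through the $\mathbf{qr}$ step while keeping the error proportional to $\rho(E,K)$ rather than $\|E\|_2$ (which forces one to exploit that the kept supports have size $\le k_i$), and tracking constants so that the subspace‑perturbation estimate collapses to precisely $\delta_E$ and $\delta_{\mathrm{Truncate}}$.
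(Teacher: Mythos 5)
Your first and last steps are sound: the Rayleigh-quotient/min–max argument for the exact, noiseless step recovers the conclusion of \cref{lemma:orth} (by a somewhat different route than the paper, which bounds the ratio $\|P^TQ_tR_t\|_F^2/\|R_t\|_F^2$ directly via \cref{eq:Fnorm_ineq}), and your closing algebra for the $\sin\Theta$ bound is exactly the paper's. The genuine gap is the middle step. The paper never compares $\mathrm{range}(\hat P_t)$ with $\mathrm{range}(\bar A Q_{t-1})$ at all: it lower-bounds $\|P^TQ_t\|_2^2$ by the ratio $\|P^T\,\mathrm{Truncate}(AQ_{t-1})\|_F^2/\|\mathrm{Truncate}(AQ_{t-1})\|_F^2$, then removes the truncation via \cref{lemma:truncate} and \cref{eq:truncate}, removes the noise via \cref{lemma:error}, and only then invokes \cref{lemma:orth}. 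Your additive decomposition $\hat P_t=\bar A Q_{t-1}+\Delta_E+\Delta_T$ followed by a Wedin-type estimate $\|\sin\Theta(Q_t,\bar Q_t)\|_2\lesssim\|\Delta_E+\Delta_T\|_2/\sigma_{\min}(\bar A Q_{t-1})$ cannot produce $\delta_{\mathrm{Truncate}}$. The discarded tail $\Delta_T$ consists of the $p-k_i$ smallest-magnitude entries of the (generally dense) columns of $AQ_{t-1}$, so the averaging argument only yields a bound of order $\sqrt{(p-k_{\min})/p}\,\|AQ_{t-1}\|$ on its norm; nothing at the level of $\|\Delta_T\|_2$ can give the $\bar k_{\max}$ branch of $\min\{\bar k_{\max},\,p-k_{\min}\}$, which is the only nontrivial branch in the sparse regime $k\ll p$. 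That branch appears in \cref{lemma:truncate} precisely because the truncation loss is measured only through inner products with the sparse columns of $P$, so that only the discarded coordinates lying in $\mathrm{supp}(p_j)$ (at most $\bar k_j$ of them) contribute; a norm-level subspace-perturbation bound discards this structure, and the iterate's tail need not be small just because the target eigenvectors are sparse.

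The $\delta_E$ term has a related, if milder, problem: the columnwise bound $\|(EQ_{t-1}[:,i])|_{F_i}\|\le\rho(E,k_i)$ only gives $\|\Delta_E\|_2\le\sqrt m\,\max_i\rho(E,k_i)$ (and $\rho(E,k_i)$ is not literally the theorem's $\rho(E,K)$), and combining this with the constant and the applicability condition of the Wedin-type bound, plus the factor $2$ from squaring, does not collapse to $4\rho(E,K)/\bigl(\lambda_m^2(1-\|\sin\Theta(P,Q_{t-1})\|_2^2)\bigr)$. You flag this as "constant tracking", but with the range-perturbation architecture you chose it is not mere bookkeeping: picking up extra factors of $\sqrt m$ in $\delta_E$ and losing the $\bar k_{\max}$ branch of $\delta_{\mathrm{Truncate}}$ changes the statement. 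To repair the argument you would essentially have to replace the subspace-perturbation step by the paper's device of tracking the single scalar ratio $\|P^TM\|_F^2/\|M\|_F^2$ as $M$ ranges over $\mathrm{Truncate}(AQ_{t-1})$, $AQ_{t-1}$, and $\bar A Q_{t-1}$, which is where both $\delta_{\mathrm{Truncate}}$ and $\delta_E$ arise with the stated constants.
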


When $c\approx m$, we have:
\begin{equation}\label{eq: sinTheta}
    \|\sin\Theta(P, Q_t)\|_F^2\lessapprox \frac{\gamma^2\|\sin\Theta (P, Q_{t-1})\|_F^2}{1-(1-\gamma^2)\|\sin\Theta(P, Q_{t-1})\|_2^2} + m\delta_E + m\delta_{\text{Truncate}}.
\end{equation}
For $m=1$, the inequality~\cref{eq: sinTheta} holds exactly and we can derive a uniform convergence bound for $\|\sin\Theta(P, Q_t)\|_F^2$. For $m>1$, as $Q_t\rightarrow P$, $c\approx m$ and $\|\sin\Theta(P, Q_t)\|_F^2$ converges at the asymptotic rate $\gamma = \frac{\lambda_{m+1}}{\lambda_m}$.
\begin{remark}
A natural question that arises is whether truncating the vector only at the last step would give a better result. Besides computational concerns, truncating at each step helps to reduce the perturbation error, which is proportional to $\rho(E,k)$. At each step, if we do not truncate, i.e. $k=p$, then there is no truncation error, but $\rho(E,k)=\rho(E)$ can be large. On the other hand, if we truncate to $k$ nonzeros with $k\approx \bar{k}\ll p$, then the truncation error could potentially be large but $\rho(E,k)\approx\rho(E,\bar{k})\ll \rho(E)$. We recommend keeping $k$ close to $p$ in the first few iterations to avoid truncating the true nonzeros. At later steps, when the nonzero indices of $x_t$ include the nonzero indices of $\bar{x}$, it is safe to truncate to a smaller $k$ without much truncation error and the perturbation error is kept low at the same time. 
\end{remark}

To prove \cref{thm:thm1}, we need the following lemmas: \cref{lemma:orth} measures the progress made by each standard orthogonal iteration, \cref{lemma:error} accounts for the perturbation error, and \cref{lemma:truncate} analyzes the truncation step. 

We first measure the progress made by the standard orthogonal iteration without any truncation or perturbation. In~\cite{van1983matrix} it has been shown that the distance between the $t^{th}$ updated matrix $Q_t\in\mathbb{R}^{p\times m}$ and the matrix of first $m$ eigenvectors $P$ converges at a rate $\gamma=|\lambda_{m+1}/\lambda_{m}|$:
\begin{equation}\label{eq:twonorm}
    \|\sin \Theta (Q_t, P)\|_2\leq \gamma^t \frac{\|\sin \Theta (Q_0, P)\|_2}{\sqrt{1-\|\sin \Theta (Q_0, P)\|_2^2}} .
\end{equation}
When $m=1$, define $\theta_t\in[0, \pi/2]$ by $\cos(\theta_t)=|p^T q_t|$ and this reduces to 
\begin{equation}
    \sin \theta_t \leq \gamma^t \tan \theta_0 .
\end{equation}
An equivalent bound measured in Frobenius norm can be derived from~\cite{van1983matrix} (the proof can be found in the Appendix):
\begin{equation}\label{eq:Fnorm}
    \|\sin \Theta (Q_t, P)\|_F\leq \gamma^t \frac{\|\sin \Theta (Q_0, P)\|_F}{\sqrt{1-\|\sin \Theta (Q_0, P)\|_2^2}}.
\end{equation}
A one-step bound can also be derived from~\cite{van1983matrix}:
\begin{equation}\label{eq:onestep}
    \|\sin \Theta (Q_{t}, P)\|_F\leq \gamma \frac{\|\sin \Theta (Q_{t-1}, P)\|_F}{\sqrt{1-\|\sin \Theta (Q_{t-1}, P)\|_2^2}}.
\end{equation}
We provide the following lemma for a similar approximation of the distance update in each iteration:

\begin{lemma}\label{lemma:orth}
Let $P$ be the matrix of eigenvectors corresponding to the largest $m$ (in absolute value) eigenvalues of a symmetric matrix $\bar{A}$, and $\Lambda_m=\diag(\lambda_1, \cdots, \lambda_m)$, and let $\gamma=|\lambda_{m+1}/\lambda_{m}|$. Given any $Q_{t-1}\in \mathbb{R}^{p\times m}$ such that $Q_{t-1}^T Q_{t-1}=I$, let $Q_t$ be the orthogonal matrix obtained by QR factorization of $\bar{A}Q_{t-1}$, i.e. $Q_tR_t=\bar{A} Q_{t-1}$, then
\begin{equation}\label{eq:standard}
    \|P^TQ_t\|^2_2\geq \frac{\|P^TQ_{t-1}\|^2_F}{(1-\gamma^2)\|P^TQ_{t-1}\|^2_F+m\gamma^2}.
\end{equation}
Assume that $\|P^TQ_t\|_F^2=c\|P^TQ_t\|_2^2$, where $c \in [1,m]$. Then we have:
\begin{equation}
\|\sin\Theta (P, Q_{t})\|_F^2 \leq \frac{\gamma^2\|\sin\Theta (P, Q_{t-1})\|_F^2+\frac{m-c}{m}\|P^TQ_{t-1}\|_F^2}{1-(1-\gamma^2)\|\sin\Theta(P, Q_{t-1})\|_2^2}.
\end{equation}
\end{lemma}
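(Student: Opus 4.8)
The plan is to reduce both claims to an eigen-analysis of the single $m\times m$ matrix $P^{T}Q_t$. Start from the spectral decomposition $\bar A=P\Lambda_m P^{T}+P_\perp\Lambda_\perp P_\perp^{T}$, where $P_\perp$ collects the trailing $p-m$ eigenvectors and $\Lambda_\perp=\diag(\lambda_{m+1},\dots,\lambda_p)$, and introduce $X:=P^{T}Q_{t-1}$, $Y:=P_\perp^{T}Q_{t-1}$, so that $X^{T}X+Y^{T}Y=I_m$ by orthonormality of $Q_{t-1}$. Multiplying $Q_tR_t=\bar A Q_{t-1}$ on the left by $P^{T}$ and by $P_\perp^{T}$ gives $P^{T}Q_t=\Lambda_m X R_t^{-1}$ and $P_\perp^{T}Q_t=\Lambda_\perp Y R_t^{-1}$, and orthonormality of $Q_t$ forces $R_t^{T}R_t=X^{T}\Lambda_m^{2}X+Y^{T}\Lambda_\perp^{2}Y=:M+N$. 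This uses that $\bar A Q_{t-1}$ has full column rank, so that $R_t$ is invertible; I will record this, together with $\|P^{T}Q_{t-1}\|_2>0$ and $\|P^{T}Q_{t-1}\|_F>0$, as the standing nondegeneracy hypotheses, which hold generically along the iteration.

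For the first inequality, note that $\|P^{T}Q_t\|_2^{2}=\lambda_{\max}\big(R_t^{-T}MR_t^{-1}\big)=\lambda_{\max}\big(M(M+N)^{-1}\big)=\max_{w\neq 0}\frac{w^{T}Mw}{w^{T}(M+N)w}=\big(1+\min_{w}\tfrac{w^{T}Nw}{w^{T}Mw}\big)^{-1}$. Now bound the generalized Rayleigh quotient pointwise using $\Lambda_m^{2}\succeq\lambda_m^{2}I_m$ and $\Lambda_\perp^{2}\preceq\lambda_{m+1}^{2}I_{p-m}$, which yields $\frac{w^{T}Nw}{w^{T}Mw}\le\gamma^{2}\frac{\|Yw\|^{2}}{\|Xw\|^{2}}$; since $Y^{T}Y=I-X^{T}X$, the minimum of the right-hand ratio over $w$ equals $\|X\|_2^{-2}-1=\tan^{2}\theta_{\min}(P,Q_{t-1})$, where $\theta_{\min}$ is the smallest canonical angle. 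Hence $\|P^{T}Q_t\|_2^{2}\ge\big(1+\gamma^{2}\tan^{2}\theta_{\min}(P,Q_{t-1})\big)^{-1}$. Finally convert the $2$-norm quantity into the Frobenius quantities in the statement via the averaging inequalities $\sin^{2}\theta_{\min}=\min_j\sin^{2}\theta_j\le\frac1m\|\sin\Theta(P,Q_{t-1})\|_F^{2}$ and $\cos^{2}\theta_{\min}=\max_j\cos^{2}\theta_j\ge\frac1m\|P^{T}Q_{t-1}\|_F^{2}$, which give $\tan^{2}\theta_{\min}(P,Q_{t-1})\le\|\sin\Theta(P,Q_{t-1})\|_F^{2}/\|P^{T}Q_{t-1}\|_F^{2}$; substituting, and using $\|\sin\Theta(P,Q_{t-1})\|_F^{2}=m-\|P^{T}Q_{t-1}\|_F^{2}$, collapses the expression to exactly $\frac{\|P^{T}Q_{t-1}\|_F^{2}}{(1-\gamma^{2})\|P^{T}Q_{t-1}\|_F^{2}+m\gamma^{2}}$.

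For the second inequality, use $\|\sin\Theta(P,Q_t)\|_F^{2}=m-\|P^{T}Q_t\|_F^{2}=m-c\|P^{T}Q_t\|_2^{2}$ and plug in the lower bound on $\|P^{T}Q_t\|_2^{2}$ just obtained. Writing $S:=\|\sin\Theta(P,Q_{t-1})\|_F^{2}$, a short rearrangement gives $\|\sin\Theta(P,Q_t)\|_F^{2}\le\frac{(m-c)(m-S)+m\gamma^{2}S}{m-(1-\gamma^{2})S}$; recognizing the numerator as $m\big(\gamma^{2}S+\tfrac{m-c}{m}(m-S)\big)$ and lowering the denominator via $m-(1-\gamma^{2})S\ge m\big(1-(1-\gamma^{2})\|\sin\Theta(P,Q_{t-1})\|_2^{2}\big)$ (again because $\max_j\sin^{2}\theta_j\ge\frac1m S$) yields the claimed bound once we substitute $m-S=\|P^{T}Q_{t-1}\|_F^{2}$ and $S=\|\sin\Theta(P,Q_{t-1})\|_F^{2}$.

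I expect the crux to be the first inequality: massaging $\|P^{T}Q_t\|_2^{2}$ into the generalized Rayleigh-quotient form where the monotone envelopes $M\succeq\lambda_m^{2}X^{T}X$ and $N\preceq\lambda_{m+1}^{2}Y^{T}Y$ apply, and realizing that what falls out naturally is $\tan^{2}$ of the \emph{smallest} canonical angle, which must then be traded --- at a loss encoded by the factor $m$, and downstream by the $c$-dependence --- for the Frobenius-norm quantities through the averaging step. The remaining work is elementary algebra together with the nondegeneracy bookkeeping needed to justify the inverses and the positivity of all denominators.
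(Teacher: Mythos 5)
Your proposal is correct, but it reaches the key inequality by a different route than the paper. The paper never inverts $R_t$: starting from $Q_tR_t=\bar A Q_{t-1}$ and the same block decomposition $X=P^TQ_{t-1}$, $Y={P^{\perp}}^TQ_{t-1}$, it bounds $\|P^TQ_t\|_2^2\ge \|P^T\bar AQ_{t-1}\|_F^2/\|\bar AQ_{t-1}\|_F^2=\|\Lambda_m X\|_F^2/(\|\Lambda_m X\|_F^2+\|\Lambda' Y\|_F^2)$ using only the submultiplicative inequality $\|AB\|_F\le\|A\|_2\|B\|_F$ and the trace identity $\|X\|_F^2+\|Y\|_F^2=m$, so the Frobenius-form bound \cref{eq:standard} drops out directly, with no nonsingularity assumptions on $R_t$ or $X$. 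You instead compute $\|P^TQ_t\|_2^2$ exactly as $\lambda_{\max}\bigl(M(M+N)^{-1}\bigr)$ with $M=X^T\Lambda_m^2X$, $N=Y^T{\Lambda'}^2Y$, pass to the generalized Rayleigh quotient, and obtain the sharper intermediate estimate $\|P^TQ_t\|_2^2\ge\bigl(1+\gamma^2\tan^2\theta_{\min}(P,Q_{t-1})\bigr)^{-1}$ — essentially the classical per-step $2$-norm bound proved in the paper's appendix via $\|R_t^{-1}\|_2$ — and then deliberately weaken it through the averaging inequalities $\min_j\sin^2\theta_j\le\frac1m\|\sin\Theta\|_F^2$, $\max_j\cos^2\theta_j\ge\frac1m\|P^TQ_{t-1}\|_F^2$ to recover exactly \cref{eq:standard}. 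What your route buys is the stronger $\theta_{\min}$ bound as a byproduct and a transparent variational interpretation; what it costs is the nondegeneracy bookkeeping (invertibility of $R_t$ and positivity of $w^TMw$) that the paper's norm-ratio argument avoids entirely, though you do flag these hypotheses and they hold whenever the QR step is well posed. Your treatment of the second inequality (the $c$-assumption, the identity $\|\sin\Theta(P,Q_t)\|_F^2=m-\|P^TQ_t\|_F^2$, and relaxing the denominator via $\|\sin\Theta\|_F^2\le m\|\sin\Theta\|_2^2$) coincides with the paper's algebra.
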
 

\begin{proof}
We can decompose $Q_{t-1}$ as $Q_{t-1}=PX+P^{\perp}Y$, where $P^{\perp}$ is the orthogonal complement of $P$ and its columns are the eigenvectors of $\bar{A}$ corresponding to the $m+1, \cdots p$ eigenvalues, i.e. $\bar{A}P=P\Lambda_m$, $\bar{A}P^{\perp}=P^{\perp}\Lambda'$, where $\Lambda'=\diag(\lambda_{m+1}, \cdots, \lambda_p)$. We have the following equations:
\begin{align}
    & Q_{t-1}^TQ_{t-1}=X^TX+Y^TY=I \Rightarrow \|X\|_F^2+\|Y\|_F^2=m, \label{eq:e313}\\
    & \|P^TQ_tR_t\|_F^2=\|P^T\bar{A}Q_{t-1}\|_F^2=\|P^T\bar{A}(PX+P^{\perp}Y)\|_F^2=\|\Lambda_m X\|_F^2\geq \lambda_m^2\|X\|_F^2. \label{eq:e314}
\end{align}

Since $\|Q_tR_t\|=\|R_t\|=\|\bar{A}Q_{t-1}\|$, and
\begin{equation}\label{eq:e315}
    \|\bar{A}Q_{t-1}\|_F^2=\|P\Lambda_m X +P^{\perp} \Lambda' Y\|_F^2=\|\Lambda_m X\|_F^2 +\| \Lambda' Y\|_F^2,
\end{equation}
We have
\begin{align}
    \|P^TQ_t\|_2^2 &\geq \frac{\|P^T Q_tR_t\|_F^2}{\|R_t\|_F^2}=\frac{\| P^T Q_tR_t\|_F^2}{\|\bar{A}Q_{t-1}\|_F^2} \text{    by~\cref{eq:Fnorm_ineq}}\\
    &\geq \frac{\|\Lambda_m X\|_F^2}{\|\Lambda_m X\|_F^2 +\| \Lambda' Y\|_F^2} \text{   by~\cref{eq:e314} and \cref{eq:e315}}\\
    &\geq \frac{\lambda_m^2\|X\|_F^2}{\lambda_m^2\|X\|_F^2+\lambda_{m+1}^2\|Y\|_F^2}  \text{   by~\cref{eq:e314}}\\
    &=\frac{\lambda_m^2\|X\|_F^2}{\lambda_m^2\|X\|_F^2+\lambda_{m+1}^2(m-\|X\|_F^2)}   \text{   by~\cref{eq:e313}}\\
    &=\frac{\|P^TQ_{t-1}\|^2_F}{(1-\gamma^2)\|P^TQ_{t-1}\|^2_F+m\gamma^2}.\label{eq:PQt_result}
\end{align}

Assume that $\|P^TQ_t\|_F^2=c\|P^TQ_t\|_2^2$, where $c \in [1,m]$, then we have
\begin{align}
    \|\sin\Theta(P, Q_t)\|_F^2 &= m-\|P^TQ_t\|_F^2 = m-c\|P^TQ_t\|_2^2\\
    &\leq \frac{m\gamma^2\|\sin\Theta (P, Q_{t-1})\|_F^2+(m-c)\|P^TQ_{t-1}\|_F^2}{m-(1-\gamma^2)\|\sin\Theta(P, Q_{t-1})\|_F^2} \text{   by~\cref{eq:PQt_result}}\\
    &\leq \frac{\gamma^2\|\sin\Theta (P,Q_{t-1})\|_F^2+\frac{m-c}{m}\|P^TQ_{t-1}\|_F^2}{1-(1-\gamma^2)\|\sin\Theta(P, Q_{t-1})\|_2^2}
    \text{   by~\cref{eq:F2norm_relations}.}
\end{align}

When $c\approx m$, 
\begin{equation}
    \|\sin\Theta(P, Q_t)\|_F^2\lessapprox \frac{\gamma^2\|\sin\Theta (P, Q_{t-1})\|_F^2}{1-(1-\gamma^2)\|\sin\Theta(P, Q_{t-1})\|_2^2}.
\end{equation}

\end{proof}

\cref{lemma:orth} measures the progress made by the orthogonalization step, and the QR factorization can be replaced by other factorization methods, as long as the updated matrix $Q$ is orthogonal.



\begin{lemma}\label{lemma:error}
Suppose that $A=\bar{A}+E$, where $A$ and $\bar{A}$ are symmetric positive semidefinite matrices. Suppose that $P$ is the matrix of eigenvectors of $\bar{A}$ corresponding to the largest $m$ eigenvalues. Assume $\lambda_1(\bar{A})=1$. Let $Q\in\mathbb{R}^{p\times m}$ be any sparse matrix such that $Q^TQ=I$, $\|q_i\|_0\leq k_i$. Then 

\begin{equation}
    \frac{\|P^TAQ\|_F^2}{\|AQ\|_F^2}\geq \frac{\|P^T\bar{A}Q\|_F^2}{\|\bar{A}Q\|_F^2}-\frac{4m\rho(E,K)}{\|\bar{A}Q\|_F^2},
\end{equation}
where $\rho(E, K)=\max_{Q^TQ=I_m}\|EQ\|_2 \text{ subject to }\|q_i\|_0\leq k_i, \ K=[k_1,\cdots, k_m]$.
\end{lemma}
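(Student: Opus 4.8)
The plan is to compare the two Rayleigh-type ratios $\|P^TAQ\|_F^2/\|AQ\|_F^2$ and $\|P^T\bar A Q\|_F^2/\|\bar A Q\|_F^2$ by substituting $A = \bar A + E$ and bounding all the cross terms involving $E$ by $\rho(E,K)$. First I would write $AQ = \bar A Q + EQ$, so that $\|AQ\|_F^2 = \|\bar A Q\|_F^2 + 2\langle \bar A Q, EQ\rangle + \|EQ\|_F^2$ and similarly $\|P^TAQ\|_F^2 = \|P^T\bar A Q\|_F^2 + 2\langle P^T\bar A Q, P^T EQ\rangle + \|P^TEQ\|_F^2$. The columns of $Q$ satisfy $\|q_i\|_0 \le k_i$ and $Q^TQ = I$, so by the definition of $\rho(E,K)$ we have $\|EQ\|_2 \le \rho(E,K)$, hence $\|EQ\|_F \le \sqrt{m}\,\rho(E,K)$ (rank at most $m$), and likewise $\|P^TEQ\|_F \le \|EQ\|_F \le \sqrt m\,\rho(E,K)$. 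The normalization $\lambda_1(\bar A)=1$ gives $\|\bar A Q\|_2 \le 1$ and $\|\bar A Q\|_F \le \sqrt m$, and also $\|P^T\bar A Q\|_F \le \sqrt m$; applying Cauchy--Schwarz (or $|\langle X,Y\rangle|\le \|X\|_F\|Y\|_F$) to the cross terms then controls every $E$-dependent quantity by a constant multiple of $\rho(E,K)$ — note $\rho(E,K)$ appears to the first power in the claimed bound, which is consistent with absorbing the $\|EQ\|_F^2$ term using $\rho(E,K)\le 1$-type normalization (or, more carefully, the statement implicitly treats $\rho(E,K)$ as small so that $\rho(E,K)^2 \le \rho(E,K)$).

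Next I would assemble these estimates into a lower bound on the perturbed ratio. Writing $a = \|P^T\bar A Q\|_F^2$, $b = \|\bar A Q\|_F^2$, and letting $\varepsilon_1, \varepsilon_2$ denote the (signed) perturbations of numerator and denominator with $|\varepsilon_1|, |\varepsilon_2| \lesssim m\rho(E,K)$, I need $\frac{a+\varepsilon_1}{b+\varepsilon_2} \ge \frac{a}{b} - \frac{4m\rho(E,K)}{b}$. Since $a \le b$ (as $P^T$ is a projection), $\frac{a+\varepsilon_1}{b+\varepsilon_2} \ge \frac{a+\varepsilon_1}{b} \cdot \frac{1}{1+\varepsilon_2/b} \ge \frac{a+\varepsilon_1}{b}(1 - \varepsilon_2/b)$ when $\varepsilon_2 \ge 0$, or one handles $\varepsilon_2 < 0$ directly; expanding and using $a/b \le 1$ collapses the error into a single term bounded by $\frac{c\,m\rho(E,K)}{b}$ for an explicit constant $c$, which the authors have rounded up to $4$. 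I would be a little careful about the sign of $\varepsilon_2$: the cleanest route is to lower-bound the numerator by $a - 2\sqrt{ab}\,\rho(E,K)^{1/2}\cdot\sqrt m$-type terms and upper-bound the denominator, or simply to observe $\|AQ\|_F^2 \le (\|\bar A Q\|_F + \|EQ\|_F)^2$ and $\|P^TAQ\|_F^2 \ge (\|P^T\bar A Q\|_F - \|P^TEQ\|_F)^2$, then divide.

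The main obstacle I anticipate is bookkeeping the constant and the powers of $\rho(E,K)$ honestly: the cross terms are genuinely first-order in $\rho(E,K)$ but the $\|EQ\|_F^2$ terms are second-order, so to land exactly on $4m\rho(E,K)/\|\bar A Q\|_F^2$ one must either invoke an implicit smallness assumption $\rho(E,K) \le 1$ (reasonable, since $\lambda_1(\bar A)=1$ and $E$ is a perturbation) or carry a $\rho(E,K)^2$ term and absorb it. A secondary subtlety is confirming $\|EQ\|_F \le \sqrt m \rho(E,K)$ rather than just $\|EQ\|_F \le \rho(E,K)$: this uses $\mathrm{rank}(EQ) \le m$ together with inequality \cref{eq:F2norm_relations}. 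Everything else is routine manipulation of Frobenius norms and the triangle/Cauchy--Schwarz inequalities.
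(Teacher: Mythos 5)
Your overall strategy is the right one and, in particular, the fallback you mention at the end of your second paragraph (lower-bound $\|P^TAQ\|_F$ and upper-bound $\|AQ\|_F$ by the triangle inequality, then divide) is exactly the route the paper takes. The ingredients you list are also the correct ones: $\|P^TEQ\|_2\le\|EQ\|_2\le\rho(E,K)$, the rank-$m$ conversion $\|EQ\|_F\le\sqrt m\,\rho(E,K)$, and the normalization $\lambda_1(\bar A)=1$ giving $\|P^T\bar AQ\|_F\le\|\bar AQ\|_F\le\sqrt m$.

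The genuine gap is that you never close the bookkeeping that produces the stated error $4m\rho(E,K)/\|\bar AQ\|_F^2$, and your two proposed escapes are not what is needed: assuming $\rho(E,K)\le 1$ would silently weaken the lemma, and ``the authors rounded up to $4$'' is not an argument. Moreover, your expand-the-squares route, assembled naively, really does leave a residual $m\rho(E,K)^2$ coming from $\|EQ\|_F^2$ in the \emph{denominator}, so the issue is not cosmetic. The paper avoids it by ordering the steps differently: first bound the \emph{unsquared} ratio, using $\frac{x-y}{c+d}\ge\frac{x-y-d}{c}$ (valid here because $x=\|P^T\bar AQ\|_F\le\|\bar AQ\|_F=c$), which moves $\|EQ\|_F$ into the numerator at first order and leaves $\|\bar AQ\|_F$ alone in the denominator; this gives $\frac{\|P^TAQ\|_F}{\|AQ\|_F}\ge\frac{\|P^T\bar AQ\|_F-2\sqrt m\,\rho(E,K)}{\|\bar AQ\|_F}$. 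Squaring then produces $\|P^T\bar AQ\|_F^2-4\sqrt m\,\|P^T\bar AQ\|_F\,\rho(E,K)+4m\rho(E,K)^2$, where the quadratic term has a \emph{favorable} sign and is simply dropped, and the cross term is bounded by $4m\rho(E,K)$ via $\|P^T\bar AQ\|_F\le\sqrt m$ --- this is precisely where $\lambda_1(\bar A)=1$ is used, and no smallness of $\rho(E,K)$ is required. (Your expansion route can also be pushed through to the exact constant by cross-multiplying and using $\|P^T\bar AQ\|_F^2\le m$, but it takes that extra argument; as written, your plan stops short of it.)
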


\begin{proof}
Since $A=\bar{A}+E$, using triangle inequality of norms, we have
\begin{align}
    \frac{\|P^TAQ\|_F}{\|AQ\|_F} &= \frac{\|P^T(\bar{A}+E)Q\|_F}{\|(\bar{A}+E)Q\|_F} \geq \frac{\|P^T\bar{A}Q\|_F-\|P^TEQ\|_F}{\|\bar{A}Q\|_F+\|EQ\|_F}\\
    &\geq \frac{\|P^T\bar{A}Q\|_F-\|P^TEQ\|_F-\|EQ\|_F}{\|\bar{A}Q\|_F}\\
    &\geq \frac{\|P^T\bar{A}Q\|_F-\sqrt{m}(\|P^TEQ\|_2+\|EQ\|_2)}{\|\bar{A}Q\|_F}\\
    &\geq \frac{\|P^T\bar{A}Q\|_F-2\sqrt{m}\rho(E,K)}{\|\bar{A}Q\|_F}\\
    \Rightarrow \frac{\| P^T AQ \|_F^2}{\| AQ \|_F^2} &\geq \frac{\| P^T \bar{A} Q \|_F^2 - 4 \sqrt{m} \| P^T \bar{A} Q \|_F\rho(E,K) + 4m \rho(E,K)^2}{\| \bar{A}Q \|_F^2}\\
    &\geq \frac{\|P^T\bar{A}Q\|_F^2}{\|\bar{A}Q\|_F^2}-\frac{4m\rho(E,K)}{\|\bar{A}Q\|_F^2}.
\end{align}

\end{proof}

\begin{remark}
 In \cref{alg:TOrth}, $Q_tR_t=\text{Truncate}(AQ_{t-1})$, and in theory $Q_t$ can be dense. If the columns of 
 $\text{Truncate}(A Q_{t-1})$ mostly have nonzeros in nonoverlapping sets of rows, then
 the columns of this matrix will be almost orthogonal, and $Q_t$ will be similarly
 sparse. The first column of $Q_t$ will definitely 
 have the same sparsity as the first column of $\text{Truncate}(A Q_{t-1})$, but later 
 columns are orthogonalized against more vectors and so may become denser.
 \end{remark}

To measure the loss incurred during truncation, we establish the lemma below. 

\begin{lemma}\label{lemma:truncate}
Consider a unit vector $\bar{x}\in\mathbb{R}^p$ with support set $supp(\bar{x})=\bar{F}$, and $\bar{k}=|\bar{F}|$. Consider a vector $y$ with the $k-$largest absolute values with indices in set $F$. Then
\begin{equation}
    \frac{|\text{Truncate}(y, F)^T\bar{x}|}{\|\text{Truncate}(y,F)\|}\geq \frac{|y^T\bar{x}|}{\|y\|}-\sqrt{\frac{\min\{\bar{k}, p-k\}}{p}}.
\end{equation}
\end{lemma}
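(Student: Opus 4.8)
The plan is to reduce the inequality, after a harmless rescaling, to a single energy estimate for the coordinates that truncation discards. Since both sides of the claimed bound are unchanged under $y\mapsto cy$ with $c\neq 0$, I would first normalize $\|y\|=1$. Write $y=z+w$, where $z:=\text{Truncate}(y,F)$ is supported on $F$ and $w:=\text{Truncate}(y,F^{c})$ is supported on the complementary index set $F^{c}$; then $\|z\|^{2}+\|w\|^{2}=1$, so in particular $\|z\|\le 1$, and by the choice of $F$ no coordinate of $w$ exceeds in absolute value any coordinate of $z$.

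Next I would peel off the truncation error with triangle inequalities. Using $\|z\|\le 1$ and $z=y-w$,
\[
\frac{|z^{T}\bar x|}{\|z\|}\;\ge\;|z^{T}\bar x|\;=\;|y^{T}\bar x-w^{T}\bar x|\;\ge\;|y^{T}\bar x|-|w^{T}\bar x|\;=\;\frac{|y^{T}\bar x|}{\|y\|}-|w^{T}\bar x|,
\]
so the lemma follows once I show $|w^{T}\bar x|\le\sqrt{\min\{\bar k,\,p-k\}/p}$.

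For that bound I would exploit that the supports of $w$ and $\bar x$ barely overlap. Setting $S:=\bar F\cap F^{c}$, only indices in $S$ contribute to $w^{T}\bar x$, so Cauchy--Schwarz gives $|w^{T}\bar x|=|w_{S}^{T}\bar x_{S}|\le\|w_{S}\|\,\|\bar x_{S}\|\le\|w_{S}\|$, while $|S|\le\min\{\bar k,\,p-k\}$ because $|\bar F|=\bar k$ and $|F^{c}|=p-k$. It then remains to prove $\|w_{S}\|^{2}\le\min\{\bar k,\,p-k\}/p$. Two facts should give this: (i) since $w$ consists of the $p-k$ smallest-magnitude coordinates of the unit vector $y$, the retained block carries at least a $k/p$ share of the total energy (the $k$ largest squared entries average at least the overall average $1/p$), hence $\|w_{S}\|^{2}\le\|w\|^{2}=1-\|z\|^{2}\le(p-k)/p$; and (ii) restricting further to the $\le\bar k$ indices of $S$ should cost at most the proportional factor, i.e.\ $\|w_{S}\|^{2}\le\frac{|S|}{p-k}\|w\|^{2}\le\bar k/p$. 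Combining (i) and (ii) yields $\|w_{S}\|^{2}\le\min\{\bar k,\,p-k\}/p$, and a short case split on whether $\bar k\le p-k$ confirms which term is the minimum.

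The step I expect to be the main obstacle is (ii): $S=\bar F\cap F^{c}$ is essentially an arbitrary subset of the discarded block, so the ``proportional share'' estimate is not automatic and must be extracted from the magnitude ordering of $y$ rather than assumed — for instance by bounding every entry of $w$ by the $(k+1)$-st largest absolute value of $y$, using that $k$ times its square is $\le\|z\|^{2}=1-\|w\|^{2}$, and optimizing the resulting two-sided estimate for $\|w_{S}\|^{2}$ over the free parameter $\|w\|^{2}$; reconciling that optimized bound with the denominator $p$ is the delicate point. The remaining ingredients — the two triangle inequalities, Cauchy--Schwarz, and the averaging fact in (i) — are routine.
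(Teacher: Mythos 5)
Your reduction follows essentially the same route as the paper's proof: you split $y=z+w$, bound $|w^T\bar{x}|\le\|w_S\|$ with $S=\bar{F}\cap F^{c}$ (the paper's $F_1=\bar{F}\setminus F$), and your fact (i) is sound and delivers the $(p-k)/p$ half of the minimum. But your step (ii) -- the proportional-share bound $\|w_S\|^2\le\frac{|S|}{p-k}\|w\|^2\le\bar{k}/p$ -- is precisely the point you flag as unproven, and it is a genuine gap rather than a routine detail. The magnitude ordering only gives that every entry of $w$ is at most the $k$-th largest entry of $y$ in absolute value, hence $\|w_S\|^2\le\frac{|S|}{k}\|z\|^2$, i.e.\ a bound with denominator $k$ (or, after optimizing against $\|w_S\|^2\le\|w\|^2$, denominator $k+|S|$), never $p$. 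Concretely, take $p=3$, $y=(2,\,2-\epsilon,\,0)$, $k=1$ (so $F=\{1\}$) and $\bar{x}=e_2$: then $\text{Truncate}(y,F)^T\bar{x}=0$, while $|y^T\bar{x}|/\|y\|-\sqrt{\min\{\bar{k},p-k\}/p}\approx 1/\sqrt{2}-1/\sqrt{3}>0$, so no argument can produce the stated $\sqrt{\bar{k}/p}$ term in this generality; what the ordering of $y$ actually yields is the weaker error term $\sqrt{\min\{\bar{k},p-k\}/k}$.

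For what it is worth, the paper's own proof stumbles at the same spot: it asserts $\frac{1}{|F_1|}\sum_{i\in F_1}y_i^2\le\frac{\|y\|^2}{p}$ on the grounds that $y_{F_1}$ consists of the $k_1$ smallest entries of $y$, but $F_1=\bar{F}\setminus F$ is merely \emph{some} subset of the discarded indices, and the averaging inequality fails in the example above. So your instinct about where the difficulty sits is exactly right; closing it requires either replacing $p$ by $k$ in the truncation-error term (which then propagates into $\delta_{\text{Truncate}}$ in the main theorem) or adding a hypothesis that forces the discarded overlap $\bar{F}\setminus F$ to carry at most its proportional share of the energy of $y$ (for instance $\bar{F}\subseteq F$, or direct control of $\|y_{\bar{F}\setminus F}\|$).
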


\begin{proof}
Let $F_1 =\bar{F}\symbol{92} F$, $F_2 = \bar{F}\cap F$, $F_3= F\symbol{92} \bar{F}$, then
\[\bar{x}=\bar{x}_{F_1}+\bar{x}_{F_2},\ \text{Truncate}(y, F)=y_{F_2}+y_{F_3}.\] 
Let $\bar{\alpha}=\|\bar{x}_{F_1}\|\leq 1$, $\alpha=\|y_{F_1}\|$, $k_1=|F_1|$. Note that if $\bar{F}\subseteq F$, then $F_1=\emptyset$ and $k_1=0$, $\bar{\alpha}=0$. If $k_1\neq 0$ we have:
\begin{equation}
    \frac{\alpha^2}{k_1}=\frac{\sum_{i\in F_1}y_i^2}{|F_1|}\leq \frac{\|y\|^2}{p},
\end{equation}
since $y_{F_1}$ contains the $k1-$smallest entries in $y$.
We know $k_1=|\bar{F}\symbol{92} F|\leq \min\{\bar{k}, p-k\}$, therefore 
\begin{align}
    & \alpha\leq \sqrt{\frac{k_1}{p}}\|y\|\leq \min\{\sqrt{\frac{\bar{k}}{p}}, \sqrt{\frac{p-k}{p}}\}\|y\|,\\
    &  |\text{Truncate}(y, F)^T\bar{x}|\geq |y^T\bar{x}|- \bar{\alpha}\alpha\geq |y^T\bar{x}|- \alpha\\
    & \Rightarrow |\text{Truncate}(y, F)^T\bar{x}|\geq |y^T\bar{x}|-\min\{\sqrt{\frac{\bar{k}}{p}}, \sqrt{\frac{p-k}{p}}\}\|y\|.
\end{align}

\end{proof}


\noindent\textbf{Truncation error of the matrix product $P^TQ$.} We denote each column of $Q$ by $q_i$ and each column of $P$ by $p_j$. Let $|F_i|=k_i$ and $\|p_j\|_0=\bar{k}_j$. Then the truncation error of the matrix product $P^TQ$ is given by:
\begin{align}
    (\text{Truncate}(q_i, F_i)^Tp_j)^2 &\geq (q_i^Tp_j)^2-2\sqrt{\frac{\min\{\bar{k}_j, p-k_i\}}{p}}\|q_i\|^2,\\
    \sum_{i,j}(\text{Truncate}(q_i, F)^Tp_j)^2 &\geq \sum_{i,j}(q_i^Tp_j)^2-2\sum_{i, j}\sqrt{\frac{\min\{\bar{k}_j, p-k_i\}}{p}} \|q_i\|^2
\end{align}
\begin{equation}\label{eq:truncate}
   \Rightarrow \frac{\|\text{Truncate}(Q)^TP\|_F^2}{\|\text{Truncate}(Q)\|_F^2}\geq \frac{\|Q^TP\|_F^2}{\|Q\|_F^2}-2m\sqrt{\frac{\min\{\bar{k}_{\max}, p-k_{\min}\}}{p}},
\end{equation}
where $\bar{k}_{\max}=\max_j\{\bar{k}_j\}$ and $k_{\min}=\min_i \{k_i\}$.

\begin{remark}
This bound is not tight. Assume that: $\bar{F_i}\subseteq F_i^{(t)}, \ \forall i$, then there is no truncation error and \begin{equation}
    \frac{\|\text{Truncate}(Q)^TP\|_F^2}{\|\text{Truncate}(Q)\|_F^2}\geq \frac{\|Q^TP\|_F^2}{\|Q\|_F^2}.
\end{equation}
\end{remark}



Putting everything together, we can now prove \cref{thm:thm1}:
\begin{proof}
Based on~\cref{alg:TOrth}, $Q_t$ is obtained by doing \textbf{qr} factorization on $\text{Truncate}(AQ_{t-1})$, i.e.
\begin{equation}
    Q_tR_t = \text{Truncate}(AQ_{t-1}).
\end{equation}
\begin{align}
\|P^TQ_t\|_2^2 &\geq \frac{\|P^T\text{Truncate}(AQ_{t-1})\|_F^2}{\|R_t\|_F^2}= \frac{\|P^T\text{Truncate}(AQ_{t-1})\|_F^2}{\|\text{Truncate}(AQ_{t-1})\|_F^2} \text{  by~\cref{eq:Fnorm_ineq}}\\
&\geq \frac{\|P^TAQ_{t-1}\|_F^2}{\|AQ_{t-1}\|_F^2}-\delta_{\text{Truncate}} \text{  by~\cref{eq:truncate}}\\
&\geq \frac{\|P^T\bar{A}Q_{t-1}\|_F^2}{\|\bar{A}Q_{t-1}\|_F^2}- \frac{4m\rho(E,K)}{\|\bar{A}Q_{t-1}\|_F^2}-\delta_{\text{Truncate}} \text{   by~\cref{lemma:error}}\\
&\geq  \frac{\|P^T\bar{A}Q_{t-1}\|_F^2}{\|\bar{A}Q_{t-1}\|_F^2}- \frac{4\rho(E,K)}{\lambda_{m}^2(1-\|\sin\Theta(P, Q_{t-1})\|_2^2)} - \delta_{\text{Truncate}}\label{eq:e325}\\
&\geq \frac{\|P^TQ_{t-1}\|_F^2}{(1-\gamma^2)\|P^TQ_{t-1}\|_F^2+m\gamma^2}-\delta_E-\delta_{\text{Truncate}}  \text{   by~\cref{lemma:orth}}
\end{align}
The inequality~\cref{eq:e325} holds due to the fact that:
\[\frac{m}{\|\bar{A}Q_{t-1}\|_F^2}\leq \|(\bar{A}Q_{t-1})^{-1}\|_2^2\leq \frac{1}{\lambda_{m}^2(1-\|\sin\Theta(P, Q_{t-1})\|_2^2)} \text{  by~\cref{eq:RtNorm}}.\]
Assume that $\|P^TQ_t\|_F^2=c\|P^TQ_t\|_2^2$, where $c \in [1,m]$. Then we have:
\begin{align}
\|\sin\Theta(P, Q_t)\|_F^2 &= m-\|P^TQ_t\|_F^2 = m-c\|P^TQ_t\|_2^2\\
&\leq m - c\frac{\|P^TQ_{t-1}\|_F^2}{(1-\gamma^2)\|P^TQ_{t-1}\|_F^2+m\gamma^2} + c\delta_E + c\delta_{\text{Truncate}}\\
&= \frac{m\gamma^2\|\sin\Theta (P, Q_{t-1})\|_F^2+(m-c)\|P^TQ_{t-1}\|_F^2}{m-(1-\gamma^2)\|\sin\Theta(P, Q_{t-1})\|_F^2} + c\delta_E + c\delta_{\text{Truncate}}\\
&\leq \frac{\gamma^2\|\sin\Theta (P, Q_{t-1})\|_F^2+\frac{m-c}{m}\|P^TQ_{t-1}\|_F^2}{1-(1-\gamma^2)\|\sin\Theta(P, Q_{t-1})\|_2^2} + c\delta_E + c\delta_{\text{Truncate}}
\end{align}
When $\|\sin\Theta(P, Q_{t-1})\|_F^2 \geq \frac{m-c}{1-\gamma^2}$, 
\begin{equation}
    \frac{m\gamma^2\|\sin\Theta (P, Q_{t-1})\|_F^2+(m-c)\|P^TQ_{t-1}\|_F^2}{m-(1-\gamma^2)\|\sin\Theta(P, Q_{t-1})\|_F^2}\leq \|\sin\Theta(P, Q_{t-1})\|_F^2.
\end{equation}
When $c\approx m$, 
\begin{equation}
    \|\sin\Theta(P, Q_t)\|_F^2\lessapprox \frac{\gamma^2\|\sin\Theta (P, Q_{t-1})\|_F^2}{1-(1-\gamma^2)\|\sin\Theta(P, Q_{t-1})\|_2^2} + m\delta_E + m\delta_{\text{Truncate}}.
\end{equation}
\end{proof}

We have proved that the orthogonal projector $\mathcal{Q}^{(t)}=Q_tQ_t^T$ onto $\text{span}\{Q_t\}$ converges to the true orthogonal projector $\mathcal{P}=PP^T$ onto $\text{span}\{P\}$. We now show that when the sequence of projectors converges, each column vector of $Q_t$ also converges to the corresponding column vector of $P$. Let $[Q]_i$ be the first $i$ columns of $Q$. Let $\mathcal{Q}_{i}^{(t)}:= [Q]_i[Q]_i^T$ be the orthogonal projector onto $\text{span}\{[Q_t]_i\}$ and $\mathcal{P}_{i}$ the orthogonal projector onto $\text{span}\{[P]_i\}$. Then we have the following corollary:

\begin{corollary}
If the sequence of orthogonal projectors $\{\mathcal{Q}_{i}^{(t)}\}_{t\geq 1}$ converges to $\mathcal{P}_{i}$ for all $i$, then each column vector of $Q_t$ converges to the corresponding column vector of $P$.
\end{corollary}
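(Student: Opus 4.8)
The plan is to exploit a telescoping relationship between the projectors $\mathcal{Q}_i^{(t)}$ for consecutive values of $i$, together with the fact that $\mathcal{Q}_i^{(t)} - \mathcal{Q}_{i-1}^{(t)}$ is exactly the rank-one orthogonal projector onto the $i$-th column of $Q_t$ (since the columns of $Q_t$ are orthonormal), and similarly $\mathcal{P}_i - \mathcal{P}_{i-1}$ is the rank-one projector onto the $i$-th column of $P$. Write $q_i^{(t)}$ for the $i$-th column of $Q_t$ and $p_i$ for the $i$-th column of $P$. The identity $q_i^{(t)}(q_i^{(t)})^T = \mathcal{Q}_i^{(t)} - \mathcal{Q}_{i-1}^{(t)}$ means that convergence $\mathcal{Q}_i^{(t)} \to \mathcal{P}_i$ and $\mathcal{Q}_{i-1}^{(t)} \to \mathcal{P}_{i-1}$ together force $q_i^{(t)}(q_i^{(t)})^T \to p_i p_i^T$ in any matrix norm.

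Next I would convert convergence of the rank-one projectors into convergence of the vectors themselves, up to an unavoidable sign ambiguity (a column vector is only determined by its projector up to a factor of $\pm 1$). Concretely, from $\|q_i^{(t)}(q_i^{(t)})^T - p_i p_i^T\|_2 \to 0$ one gets $\|\sin\Theta(q_i^{(t)}, p_i)\| \to 0$ by the identity $\|\sin\Theta(X,Y)\|_2 = \|XX^T - YY^T\|_2$ recorded in the preliminaries (specialized to $m=1$), which says the angle between the one-dimensional spans goes to zero. Then $|\langle q_i^{(t)}, p_i\rangle| = \cos\theta_i^{(t)} \to 1$, so choosing the sign $s_i^{(t)} = \operatorname{sign}\langle q_i^{(t)}, p_i\rangle$ gives $\|s_i^{(t)} q_i^{(t)} - p_i\|^2 = 2 - 2|\langle q_i^{(t)}, p_i\rangle| \to 0$. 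I would state the conclusion with this sign convention (or simply phrase it as ``$\operatorname{span}\{q_i^{(t)}\} \to \operatorname{span}\{p_i\}$'', which is what ``converges to'' should mean for eigenvectors, since eigenvectors are only defined up to sign).

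One subtlety worth addressing is the hypothesis ``$\{\mathcal{Q}_i^{(t)}\}$ converges to $\mathcal{P}_i$ for all $i$'': for the argument I only need this for indices $i$ and $i-1$, and the case $i=1$ uses the convention $\mathcal{Q}_0^{(t)} = \mathcal{P}_0 = 0$, so that $q_1^{(t)}(q_1^{(t)})^T = \mathcal{Q}_1^{(t)} \to \mathcal{P}_1 = p_1 p_1^T$ directly. I would also note that the differences $\mathcal{Q}_i^{(t)} - \mathcal{Q}_{i-1}^{(t)}$ are genuinely orthogonal projectors of rank exactly one — this is where orthonormality of the columns of $Q_t$ (guaranteed by the QR step in \cref{alg:TOrth}) is used; without it the difference of nested projectors need not be a projector.

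I do not expect a serious obstacle here — the result is essentially a soft consequence of the telescoping identity and the equivalence between projector norms and canonical angles. The one genuine point requiring care, rather than an obstacle, is making the sign ambiguity explicit so that the statement ``each column vector of $Q_t$ converges to the corresponding column vector of $P$'' is literally correct (it should read: converges up to sign, equivalently the one-dimensional spans converge). Everything else is bounding $\|q_i^{(t)} - s_i^{(t)} p_i\|$ by a monotone function of $\|\mathcal{Q}_i^{(t)} - \mathcal{P}_i\|_2 + \|\mathcal{Q}_{i-1}^{(t)} - \mathcal{P}_{i-1}\|_2$ and letting $t \to \infty$.
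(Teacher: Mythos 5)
Your proof is correct and takes essentially the same route as the paper's: both arguments rest on the telescoping fact that the difference of consecutive nested projectors equals the rank-one projector onto the $i$-th orthonormal column (the paper applies $(\mathcal{Q}_{i+1}^{(t)}-\mathcal{Q}_{i}^{(t)})$ directly to $q_{i+1}^{(t)}$ and takes norms to get $1-|p_{i+1}^T q_{i+1}^{(t)}|\leq \|(\mathcal{Q}_{i+1}^{(t)}-\mathcal{P}_{i+1})+(\mathcal{P}_{i}-\mathcal{Q}_{i}^{(t)})\|_2$, while you pass through convergence of the rank-one projectors and the canonical-angle identity), and both conclude $|p_i^T q_i^{(t)}|\to 1$. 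Your explicit treatment of the sign ambiguity is a reasonable refinement rather than a departure, since the paper's bound likewise only yields convergence of the columns up to sign.
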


\begin{proof}
We can express the ${i+1}^{th}$ column vector of $Q_t$ as
\[q_{i+1}^{(t)}=(\mathcal{Q}_{i+1}^{(t)}-\mathcal{Q}_{i}^{(t)}) q_{i+1}^{(t)}\]
\[=[(\mathcal{Q}_{i+1}^{(t)}-\mathcal{P}_{i+1})+(\mathcal{P}_{i+1}-\mathcal{P}_{i})+(\mathcal{P}_{i}-\mathcal{Q}_{i}^{(t)})]q_{i+1}^{(t)}\]
\[=[(\mathcal{Q}_{i+1}^{(t)}-\mathcal{P}_{i+1})+(\mathcal{P}_{i}-\mathcal{Q}_{i}^{(t)})]q_{i+1}^{(t)} + p_{i+1}p_{i+1}^T q_{i+1}^{(t)}\]
\[\Rightarrow 1 - |p_{i+1}^T q_{i+1}^{(t)}|\leq \|(\mathcal{Q}_{i+1}^{(t)}-\mathcal{P}_{i+1})+(\mathcal{P}_{i}-\mathcal{Q}_{i}^{(t)})\|_2. \]
Since the projector $\mathcal{Q}_{i+1}^{(t)}$ converges to $\mathcal{P}_{i+1}$, and $\mathcal{Q}_{i}^{(t)}$ converges to $\mathcal{P}_{i}$, we have $q_{i+1}^{(t)}$ converges to $p_{i+1}$.
\end{proof}

\section{Truncated Orthogonal Iteration with Strict Sparsity Constraint}\label{sec:sparse}
In real scenarios, final output vectors are often required to be sparse for better interpretation. Sparsity constraints are not enforced in TOrth or in ITSPCA~\cite{ma2013sparse}, since by doing QR decomposition, the $m^{th}$ vector needs to be orthogonalized against all previously obtained vectors $v_1, \cdots, v_{m-1}$. If the previous vectors are partially overlapping, the $m^{th}$ vector is likely to be dense. Therefore we also present \cref{alg:OrthT} that performs a truncation step on the vector obtained by QR, which we denote as $Q$. Specifically, we solve the following problem:
\[
Q_{\text{truncate}}=\arg\min_{\hat{Q}} \|Q-\hat{Q}\|_F^2,\quad \text{subject to: }\|\hat{q}_i\|_0\leq k_i, \ \|\hat{q}_i\|_2=1.
\]

\begin{algorithm}[H]
\caption{Truncated Orthogonal Iteration with Post Truncation (TOrthT)}\label{alg:OrthT}
\begin{algorithmic}
\STATE {Input: Symmetric positive semidefinite matrix $A\in\mathbb{S}^{p}$, initial vector $Q_0\in \mathbb{R}^{p\times m}$, cardinality vector $K\in\mathbb{R}^m$}
\STATE {Output: A sparse and near-orthogonal matrix $Q_t$}
\REPEAT
\STATE{Compute $P_t=AQ_{t-1}$}. Denote the $i^{th}$ column of $P_t$ as $p_i$.
\FOR{$i=1, \cdots, m$}
    \STATE{Let ${F}_i = \text{supp}(p_i, k_i)$ be the indices of $p_i$ with the largest $k_i$ absolute values.}
    \STATE{Compute $\hat{p}_i = \text{Truncate}(p_i, {F}_i)$.}
    \STATE{$\hat{P}_t[:,i]=\hat{p}_i$}
\ENDFOR
\STATE{Reorthogonalize $\Tilde{Q}_t = \textbf{qr}(P_t)$. Denote the $i^{th}$ column of $\Tilde{Q}_t$ as $\Tilde{q}_i$.}
\FOR{$i=1, \cdots, m$}
    \STATE{Let ${F}_i = \text{supp}(\Tilde{q}_i, k_i)$ be the indices of $\Tilde{q}_i$ with the largest $k_i$ absolute values.}
    \STATE{Compute $\hat{q}_i = \text{Truncate}(\Tilde{q}_i, \mathcal{F}_i)$.}
    \STATE{Compute $q_i = \frac{\hat{q}_i}{\|\hat{q}_i\|}$.}
    \STATE{$Q_t[:,i]=q_i$}
\ENDFOR
\STATE{$t\leftarrow t+1$}
\UNTIL{Convergence}
\end{algorithmic}
\end{algorithm}

To test the impact of this post-truncation step, we construct a toy example where $\bar{A}=V\Lambda V^T$, $A = \bar{A}+E$ and $\rho(\bar{A})=1$, $\rho(E)\approx 0.21$. We apply \cref{alg:OrthT} to three simple scenarios: a) the leading eigenvectors that we are trying to recover have completely overlapping nonzero indices; b) the leading eigenvectors have partially overlapping nonzero indices; and c) the indices are non-overlapping. We keep $K=[p,p,p]=[100,100,100]$ in the first 20 iterations, $K=[50,50,50]$ in iteration $21-40$, $K=[25,25,25]$ in iteration $41-60$, and $K=[10,10,10]$ in iteration $61-80$. In all three cases, $\|Q_{\text{truncate}}-Q\|_F^2$ is kept relatively low ($\approx 1e-4$) when the algorithm converges, as shown in \cref{fig:threecases}. For the non-overlapping case, the algorithm is able to recover the true support set before truncation. The orthogonality loss of the final output matrix $Q_t$ is also calculated, and $\|I-Q_t^TQ_t\|_F^2=1.57e-4$ in case a), 1.17e-4 in case b) and 0 in case c).  
The post-truncation step does not have a significant impact on the quality or orthogonality of the obtained vectors, especially when the true eigenvectors have non-overlapping support sets.

\begin{figure}
     \centering
     \begin{subfigure}[b]{0.3\textwidth}
         \centering
         \includegraphics[width=\textwidth]{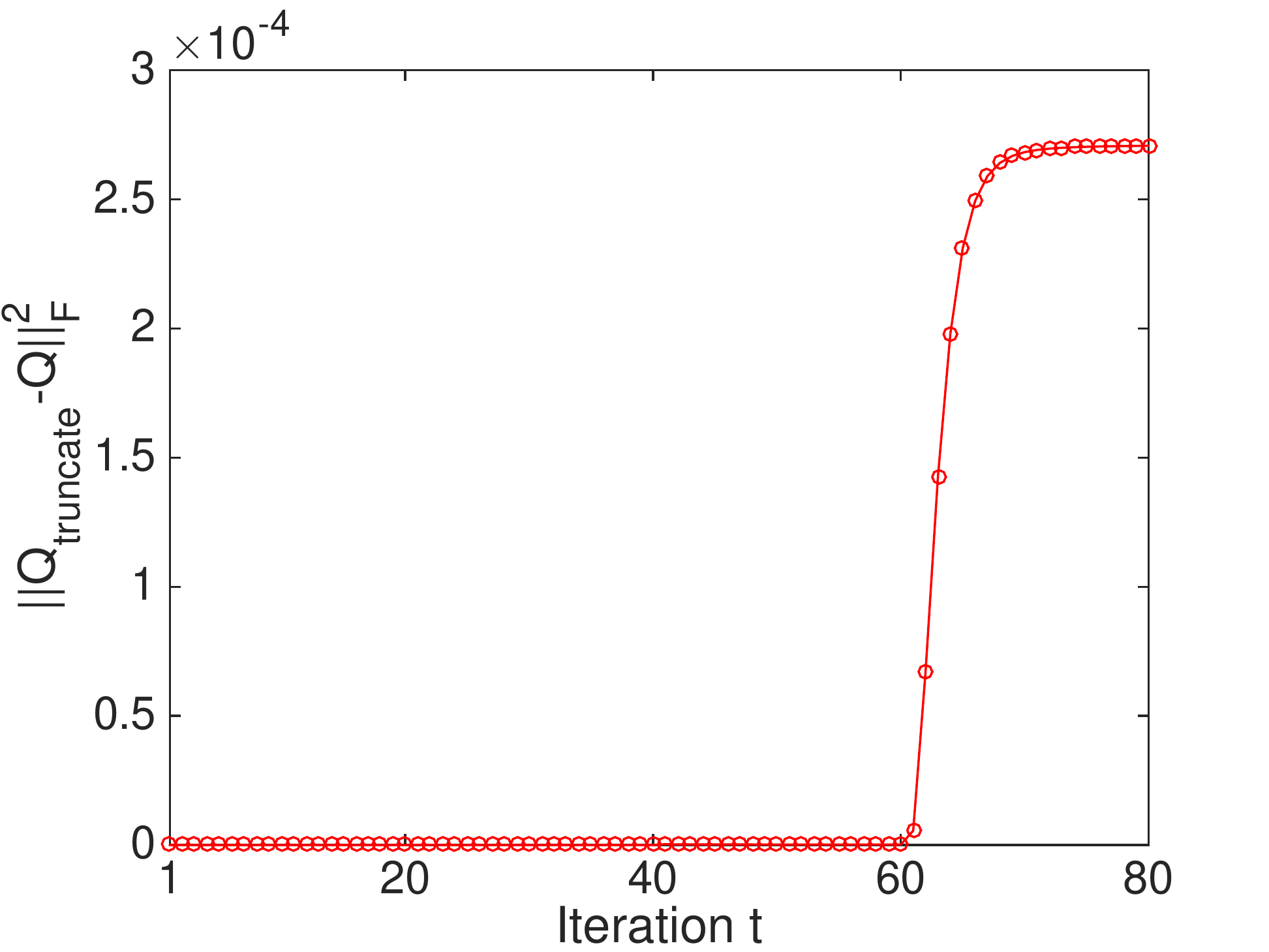}
         \caption{completely overlapping}
     \end{subfigure}
     \hfill
     \begin{subfigure}[b]{0.3\textwidth}
         \centering
         \includegraphics[width=\textwidth]{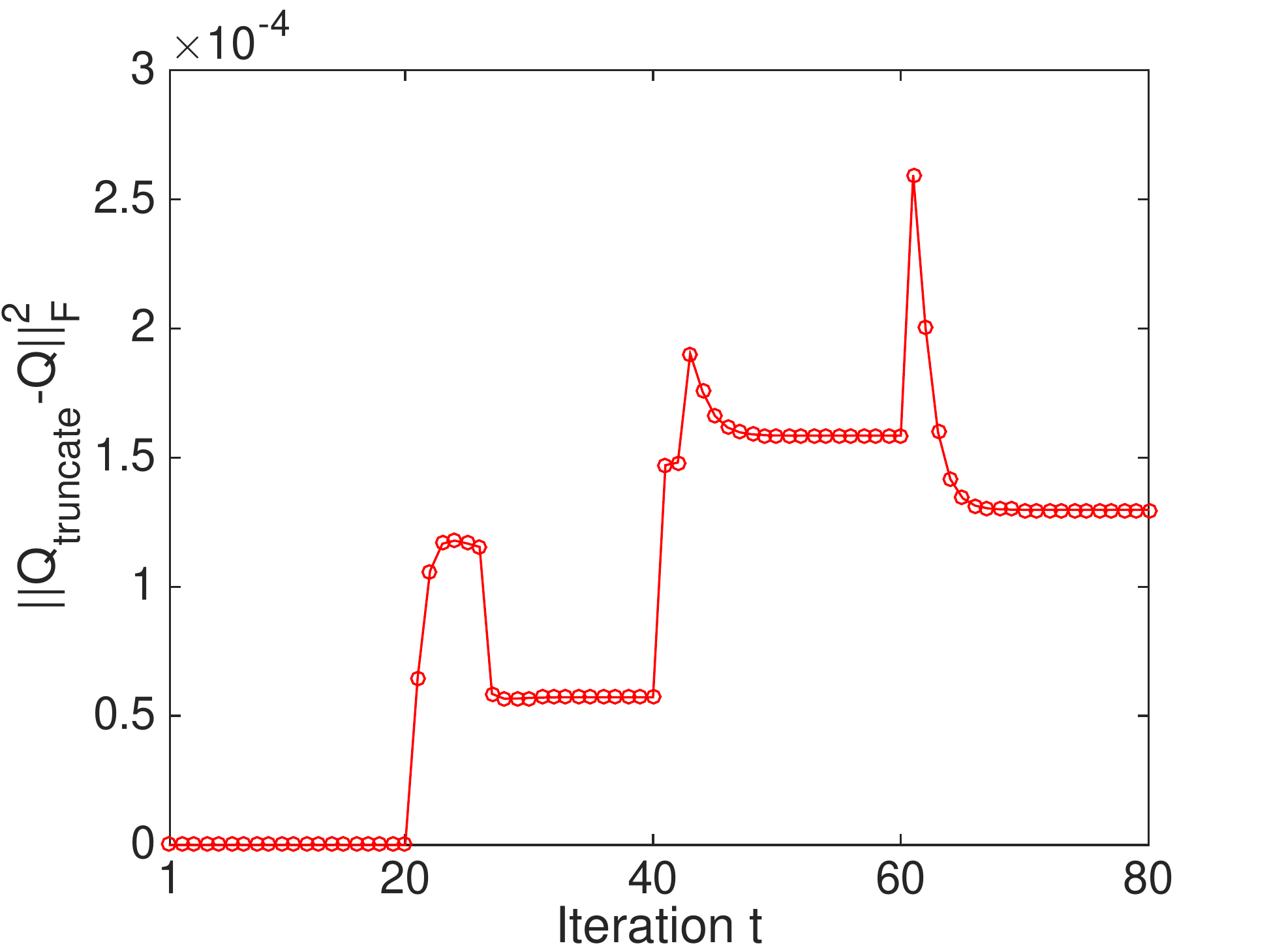}
         \caption{partially overlapping}
     \end{subfigure}
     \hfill
     \begin{subfigure}[b]{0.3\textwidth}
         \centering
         \includegraphics[width=\textwidth]{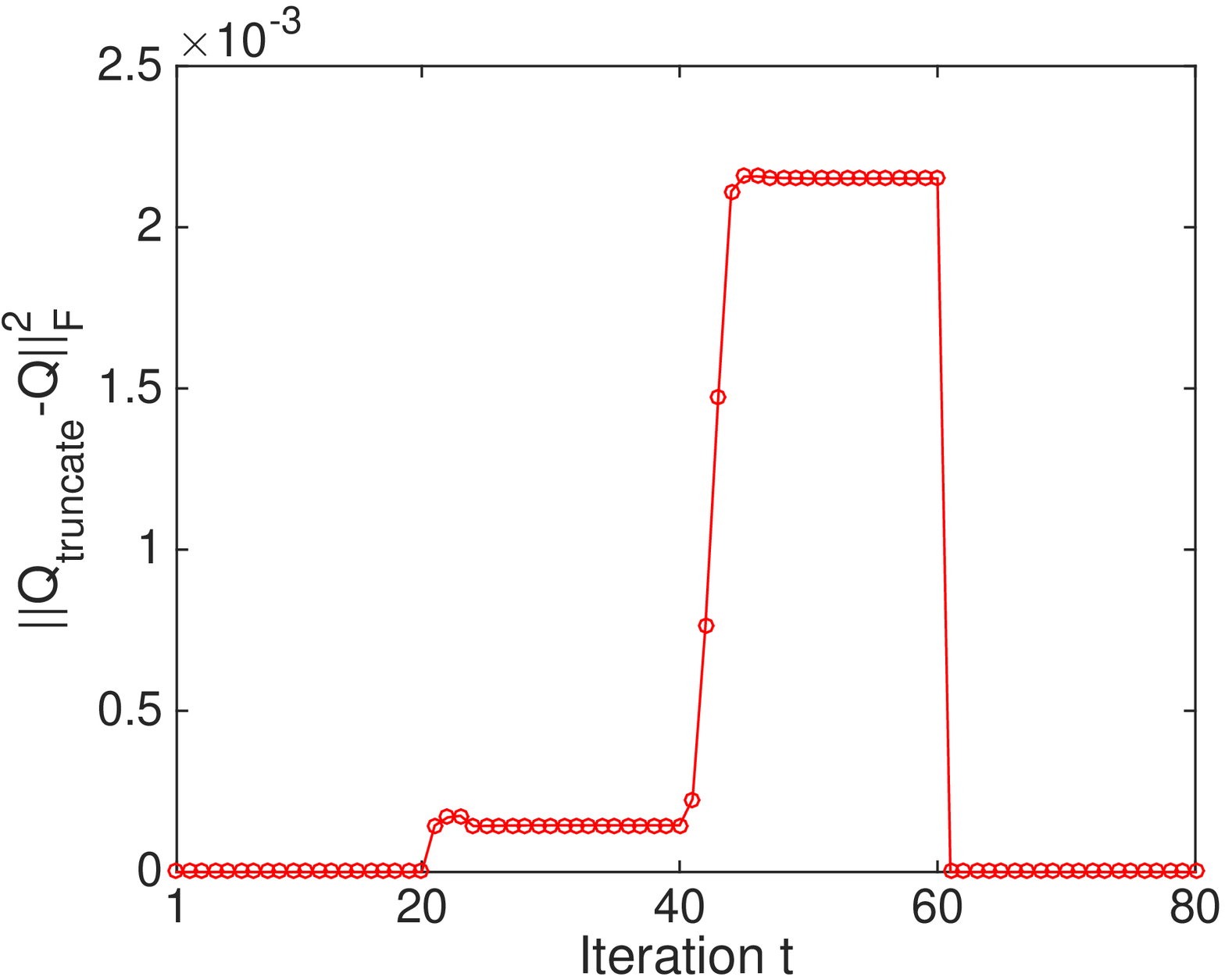}
         \caption{non-overlapping}
     \end{subfigure}
        \caption{(a) (b) (c) correspond to three different cases of support sets of the leading eigenvectors. We plot $\|Q_{\text{truncate}}-Q_t\|_F^2$ vs. iteration number $t$ to measure the difference between the output matrix before and after the truncation step. The truncation level starts at $p$ (nothing is truncated) and is decreased at iteration 21, 41, 61 and 81.}
        \label{fig:threecases}
\end{figure}

\section{Relations to Existing Algorithms}\label{sec: relations}
When estimating a single sparse eigenvector, \cref{thm:thm1} can be applied directly to the Truncated Power Method (TPower)~\cite{yuan2013truncated} using $m=1$. In this case, we have the following corollary:

\begin{corollary}
Let $P$ be the matrix of eigenvectors corresponding to the $m-$largest eigenvalues of $\bar{A}$. $A = \bar{A} + E$. Define $\gamma :=\frac{\lambda_{2}}{\lambda_{1}}<1$. Let $q_t$ be the matrix obtained at iteration $t$ by the Truncated Power method. Then
\[|\sin\angle(p, q_t)|^2\leq \frac{\gamma^2|\sin\angle (p, q_{t-1})|^2}{1-(1-\gamma^2)|\sin\angle(p, q_{t-1})|^2} + \delta_E + \delta_{\text{Truncate}}.\]
And a uniform bound is given by
\[|\sin\angle(p, q_t)|\leq \mu^t |\sin\angle(p, q_0)|+\sqrt{\delta_E+\delta_{\text{Truncate}}}\frac{1-\mu^t}{1-\mu},\]
where 
\[\mu = \frac{\gamma}{\sqrt{1-(1-\gamma^2)|\sin\angle(p, q_0)|^2}}\leq 1, \ \delta_E = \frac{4\rho(E,k)}{\lambda_2^2(1-|\sin\angle(p, q_0)|^2)}, \ \delta_{\text{Truncate}} = 2\sqrt{\frac{\min\{\bar{k}, p-k\}}{p}}.\]
\end{corollary}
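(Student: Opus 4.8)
The plan is to specialize Theorem~\ref{thm:thm1} to the case $m=1$ and then to unroll the resulting one-step recursion into a closed-form (uniform) bound. First I would observe that when $m=1$, the matrix $P$ reduces to a single unit vector $p$, each $Q_t$ to a unit vector $q_t$, the canonical angle $\Theta(P,Q_t)$ to the scalar angle $\angle(p,q_t)$, and the assumption $\|P^TQ_t\|_F^2 = c\|P^TQ_t\|_2^2$ holds trivially with $c=1=m$, so the approximate inequality~\eqref{eq: sinTheta} in fact holds \emph{exactly}. Substituting $\gamma = \lambda_2/\lambda_1$ (note $\lambda_m = \lambda_1$ and $\lambda_{m+1}=\lambda_2$ when $m=1$), and noting $\delta_E = \frac{4\rho(E,k)}{\lambda_2^2(1-|\sin\angle(p,q_{t-1})|^2)}$ and $\delta_{\text{Truncate}} = 2\sqrt{\min\{\bar k, p-k\}/p}$ from the theorem's definitions with $K=[k]$, this yields directly the first displayed inequality of the corollary, namely
\[
|\sin\angle(p,q_t)|^2 \le \frac{\gamma^2 |\sin\angle(p,q_{t-1})|^2}{1-(1-\gamma^2)|\sin\angle(p,q_{t-1})|^2} + \delta_E + \delta_{\text{Truncate}}.
\]

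For the uniform bound I would argue by induction on $t$, using monotonicity: the key structural fact is that $|\sin\angle(p,q_t)|$ is non-increasing along the iteration (up to the additive error terms), so $|\sin\angle(p,q_{t-1})|^2 \le |\sin\angle(p,q_0)|^2$ for all $t$, which lets me replace $q_{t-1}$ by $q_0$ inside the denominator and inside $\delta_E$, freezing those quantities at their initial values. With that replacement the recursion linearizes: using $\sqrt{a+b}\le\sqrt a + \sqrt b$ on the right-hand side and the bound $\frac{\gamma}{\sqrt{1-(1-\gamma^2)|\sin\angle(p,q_0)|^2}} = \mu$, one gets $|\sin\angle(p,q_t)| \le \mu\,|\sin\angle(p,q_{t-1})| + \sqrt{\delta_E + \delta_{\text{Truncate}}}$ with $\mu,\delta_E,\delta_{\text{Truncate}}$ now all constants. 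Unrolling this geometric-plus-constant recursion over $t$ steps gives $|\sin\angle(p,q_t)| \le \mu^t |\sin\angle(p,q_0)| + \sqrt{\delta_E+\delta_{\text{Truncate}}}\sum_{j=0}^{t-1}\mu^j = \mu^t|\sin\angle(p,q_0)| + \sqrt{\delta_E+\delta_{\text{Truncate}}}\frac{1-\mu^t}{1-\mu}$, which is exactly the claimed uniform bound. It remains only to check $\mu \le 1$, which follows since $\gamma < 1$ and $1-(1-\gamma^2)|\sin\angle(p,q_0)|^2 \ge \gamma^2$.

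The main obstacle I anticipate is justifying the monotonicity claim cleanly enough to freeze the denominator and $\delta_E$ at their $t=0$ values: strictly speaking, with the additive perturbation and truncation errors present, $|\sin\angle(p,q_t)|$ need not literally decrease, so one has to either (i) argue that it decreases until it enters a basin of radius comparable to $\sqrt{\delta_E+\delta_{\text{Truncate}}}$ and handle that regime separately, or (ii) phrase the induction hypothesis as ``$|\sin\angle(p,q_t)|\le\max\{|\sin\angle(p,q_0)|, r_*\}$'' for the appropriate fixed point $r_*$ of the recursion and verify the inductive step in both the ``shrinking'' and ``near-fixed-point'' cases. A secondary, purely bookkeeping nuisance is the use of $\sqrt{a+b}\le\sqrt a+\sqrt b$, which is lossy but harmless and is presumably why the statement is presented as an upper bound rather than an identity. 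Once monotonicity is pinned down, the rest is a routine unrolling of a scalar linear recursion.
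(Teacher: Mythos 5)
Your proposal is correct and is essentially the paper's own (largely unwritten) argument: the corollary is exactly \cref{thm:thm1} specialized to $m=1$, where $c=1=m$ makes the one-step bound hold exactly with $\gamma=\lambda_2/\lambda_1$, after which one takes square roots via $\sqrt{a+b}\le\sqrt{a}+\sqrt{b}$, bounds the prefactor by $\mu$ with the denominator frozen at $q_0$, and unrolls the scalar recursion $s_t\le\mu s_{t-1}+\sqrt{\delta_E+\delta_{\text{Truncate}}}$ into the geometric-sum form. The monotonicity issue you flag (needed to justify freezing the denominator and $\delta_E$ at their $t=0$ values once the additive errors are present) is a genuine subtlety, but it is one the paper itself glosses over rather than a gap relative to its proof, so your suggested induction on $\max\{|\sin\angle(p,q_0)|,r_*\}$ is, if anything, more careful than the source.
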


The Iterative Thresholding Sparse PCA (ITSPCA) proposed by ~\cite{ma2013sparse} also fits into the framework introduced in \Cref{sec:method} and uses a thresholding step. The thresholding step is performed through a user-specified function $\eta$ which satisfies:
\begin{equation}\label{eq:eta}
    |\eta(y_i, t)-y_i|\leq t, \ \eta(y_i, t)\textbf{1}_{|y_i|<t}=0\ \forall y_i\in y, \ t>0. 
\end{equation}
Common thresholding techniques include soft and hard thresholding, as well as a range of operators in between, see e.g. SCAD~\cite{fan2001variable}. The analysis in~\cref{thm:thm1} can be applied to ITSPCA if we substitute~\cref{lemma:truncate} by the following lemma: 

\begin{lemma}\label{lemma:threshold}
Consider a unit vector $\bar{x}\in\mathbb{R}^p$ with support set $supp(\bar{x})=\bar{F}$, and $\bar{k}=|\bar{F}|$. Consider a vector $y$ that is thresholded by the user-specified thresholding function $\eta$, where $\eta$ satisfies~\cref{eq:eta}.Then
\begin{equation}
    \frac{\text{Thresh}(y, t)^T\bar{x}}{\|\text{Thresh}(y, t)\|_2}\geq \frac{|y^T\bar{x}|}{\|y\|_2}-\frac{t\sqrt{\bar{k}}}{\|y\|_2}.
\end{equation}
\end{lemma}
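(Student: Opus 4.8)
\textbf{Proof proposal for \cref{lemma:threshold}.}
The plan is to reuse the structure of the proof of~\cref{lemma:truncate}, but to replace its ``restrict to a sub-vector'' step by a coordinatewise perturbation estimate tailored to property~\cref{eq:eta}. I would write $z:=\text{Thresh}(y,t)$, so that $z_i=\eta(y_i,t)$ for each $i$, and introduce the error vector $e:=z-y$. The first inequality in~\cref{eq:eta}, namely $|\eta(y_i,t)-y_i|\le t$, says precisely that $\|e\|_\infty\le t$; this is the only feature of~\cref{eq:eta} needed for the numerator estimate (the zeroing property $\eta(y_i,t)\mathbf 1_{|y_i|<t}=0$ is what actually induces sparsity in the iterates, but plays no role in this particular bound).

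For the numerator I would decompose $z^T\bar x=y^T\bar x+e^T\bar x$. Since $\bar x$ is supported on $\bar F$ with $|\bar F|=\bar k$ and $\|\bar x\|_2=1$, restricting to $\bar F$ and applying Cauchy--Schwarz gives $|e^T\bar x|=|e_{\bar F}^T\bar x_{\bar F}|\le\|e_{\bar F}\|_2\le\sqrt{\bar k}\,\|e\|_\infty\le t\sqrt{\bar k}$. Up to replacing $\bar x$ by $-\bar x$ (a column of an eigenvector matrix is defined only up to sign, so one may normalize it so that $y^T\bar x\ge 0$), this yields
\[
 z^T\bar x\ \ge\ y^T\bar x-t\sqrt{\bar k}\ =\ |y^T\bar x|-t\sqrt{\bar k}.
\]

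For the denominator I would invoke that the thresholding maps admitted by~\cref{eq:eta} which are actually used in practice --- soft thresholding, hard thresholding, and the intermediate SCAD-type operators --- are all shrinkage maps, i.e. $|\eta(y_i,t)|\le|y_i|$ coordinatewise, hence $\|z\|_2\le\|y\|_2$. Combining: in the regime $|y^T\bar x|-t\sqrt{\bar k}\ge 0$ one has $z^T\bar x\ge 0$ and $\|z\|_2\le\|y\|_2$, so $\frac{z^T\bar x}{\|z\|_2}\ge\frac{z^T\bar x}{\|y\|_2}\ge\frac{|y^T\bar x|-t\sqrt{\bar k}}{\|y\|_2}$, which is the assertion; in the complementary regime the right-hand side of the lemma is nonpositive while the numerator bound $z^T\bar x\ge|y^T\bar x|-t\sqrt{\bar k}$ already dominates it once $z^T\bar x\ge 0$, so nothing more is required.

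The only genuinely delicate point --- and hence the part I would be most careful about --- is the denominator together with the sign of $\bar x$: in~\cref{lemma:truncate} the map $y\mapsto\text{Truncate}(y,F)$ produces a literal sub-vector of $y$, so $\|\text{Truncate}(y,F)\|\le\|y\|$ is automatic, whereas here one must appeal to the shrinkage property of $\eta$ to obtain $\|z\|_2\le\|y\|_2$, and one must fix the sign of $\bar x$ to upgrade $y^T\bar x$ into $|y^T\bar x|$ on the right. Everything else is the one-line perturbation bound above. With~\cref{lemma:threshold} in hand, it slots into the proof of~\cref{thm:thm1} exactly where~\cref{lemma:truncate} is invoked: carrying the squared coordinatewise estimates through the same sum over the columns of $Q$ and of $P$ yields a variant of~\cref{eq:truncate} with $\delta_{\text{Truncate}}$ replaced by a thresholding analogue (controlled by $t$, $\bar k_{\max}$ and $m$), after which the rest of the convergence analysis is unchanged.
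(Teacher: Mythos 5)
The paper states \cref{lemma:threshold} without giving a proof, so there is no in-paper argument to compare against; judged on its own, your route is the natural analogue of the proof of \cref{lemma:truncate}, and your numerator estimate is exactly right: with $z=\text{Thresh}(y,t)$ and $e=z-y$, condition \cref{eq:eta} gives $\|e\|_\infty\le t$, and Cauchy--Schwarz restricted to $\bar F$ gives $|e^T\bar x|\le\sqrt{\bar k}\,\|e\|_\infty\le t\sqrt{\bar k}$, hence $z^T\bar x\ge y^T\bar x-t\sqrt{\bar k}$ (after fixing the sign of $\bar x$ so that $y^T\bar x\ge 0$). You are also right that the zeroing half of \cref{eq:eta} plays no role in this bound.

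The point you flag as delicate is in fact a genuine gap in the lemma as stated, not just in your proof: the denominator step needs $\|z\|_2\le\|y\|_2$, and this does \emph{not} follow from \cref{eq:eta}, nor can it be dispensed with. For example, with $p=2$, $\bar x=e_1$, $y=(2,3)$, $t=1$, the map $\eta(\cdot,1)$ defined by $\eta(u,1)=0$ for $|u|<1$, $\eta(2,1)=1$, $\eta(3,1)=4$ (and $\eta(u,1)=u$ otherwise) satisfies \cref{eq:eta}, yet $z=(1,4)$ gives $z^T\bar x/\|z\|=1/\sqrt{17}<1/\sqrt{13}=(|y^T\bar x|-t\sqrt{\bar k})/\|y\|$. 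So the shrinkage property $|\eta(u,t)|\le|u|$ that you import ``from practice'' must be added as an explicit hypothesis (it does hold for soft, hard and SCAD thresholding, so this is harmless in context, but it is a strengthening of \cref{eq:eta}, and the lemma is false without some such assumption). Separately, your closing case analysis assumes $z^T\bar x\ge 0$ in the regime $|y^T\bar x|\le t\sqrt{\bar k}$, which need not hold (e.g.\ hard thresholding can zero the positively-contributing coordinates of $\bar x$'s support while keeping a negatively-contributing one); the clean fix is to read the left-hand side as $|\text{Thresh}(y,t)^T\bar x|/\|\text{Thresh}(y,t)\|_2$, in parallel with \cref{lemma:truncate} --- without an absolute value or a sign convention the displayed inequality can fail even for hard thresholding --- and then that regime is trivial because the left-hand side is nonnegative while the right-hand side is nonpositive. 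With those two amendments (shrinkage as a stated hypothesis, absolute value on the numerator), your argument is correct and slots into \cref{thm:thm1} exactly as you describe.
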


Analysis in~\cite{ma2013sparse} under the spiked covariance model is also applicable to our \cref{alg:TOrth} since we can transform our sparsity constraint, which is an upper bound on $\ell_0$ norm, to an upper bound on $\ell_1$ norm:
\[\|q_j\|_0\leq k_j, \ \|q_j\|_2=1 \Rightarrow \|q_j\|_1\leq \sqrt{k_j}.\]

\section{Experimental results}
\label{sec:experiments}
In this section we show several numerical experiments to demonstrate the efficiency and accuracy of the proposed algorithms. Because the Truncated Orthogonal Iteration is a direct extension of the Truncated Power Method to recover multiple units, we first apply both methods on a simulated dataset with true sparse eigenvectors and report their runtime and distance to the true eigenvectors. We see that the Truncated Orthogonal Iteration is less sensitive to random initialization compared to TPower. We then use the PitProps Dataset, a standard benchmark, to evaluate the performance of the proposed algorithms and compare with state of the art. We also apply our algorithm to a sea surface temperature dataset for recovery of sparse patterns, and to the MNIST dataset for classification of handwritten digits. We refer to \cref{alg:TOrth} as TOrth and \cref{alg:OrthT} as TOrthT.

\textbf{Initialization.} We use the same warm initialization strategy as specified in \cite{yuan2013truncated}, i.e. starting with a larger $k$ and use the output as the initialization for a smaller $k$. Specifically, for each column vector we run the algorithm with $\{8k_i, 4k_i, 2k_i, k_i\}$ sequentially and use the original dimension $p$ if the multiple of $k$ is greater than $p$.

\textbf{Choosing the Cardinality Parameter $k$.} Unless the cardinality parameter $k$ is pre-specified, we start with a large cardinality constraint and gradually tighten it. Once the subspace converges for $k$, we proceed to the next truncation level with cardinality parameter $k/2$. If $k$ is too small, $\|\sin\Theta(Q_t, Q_{t-1}\|_F^2)$ is likely to have a big jump, as illustrated in \cref{fig:init}.

\begin{figure}[htbp]
  \centering
  \includegraphics[width=5.75cm]{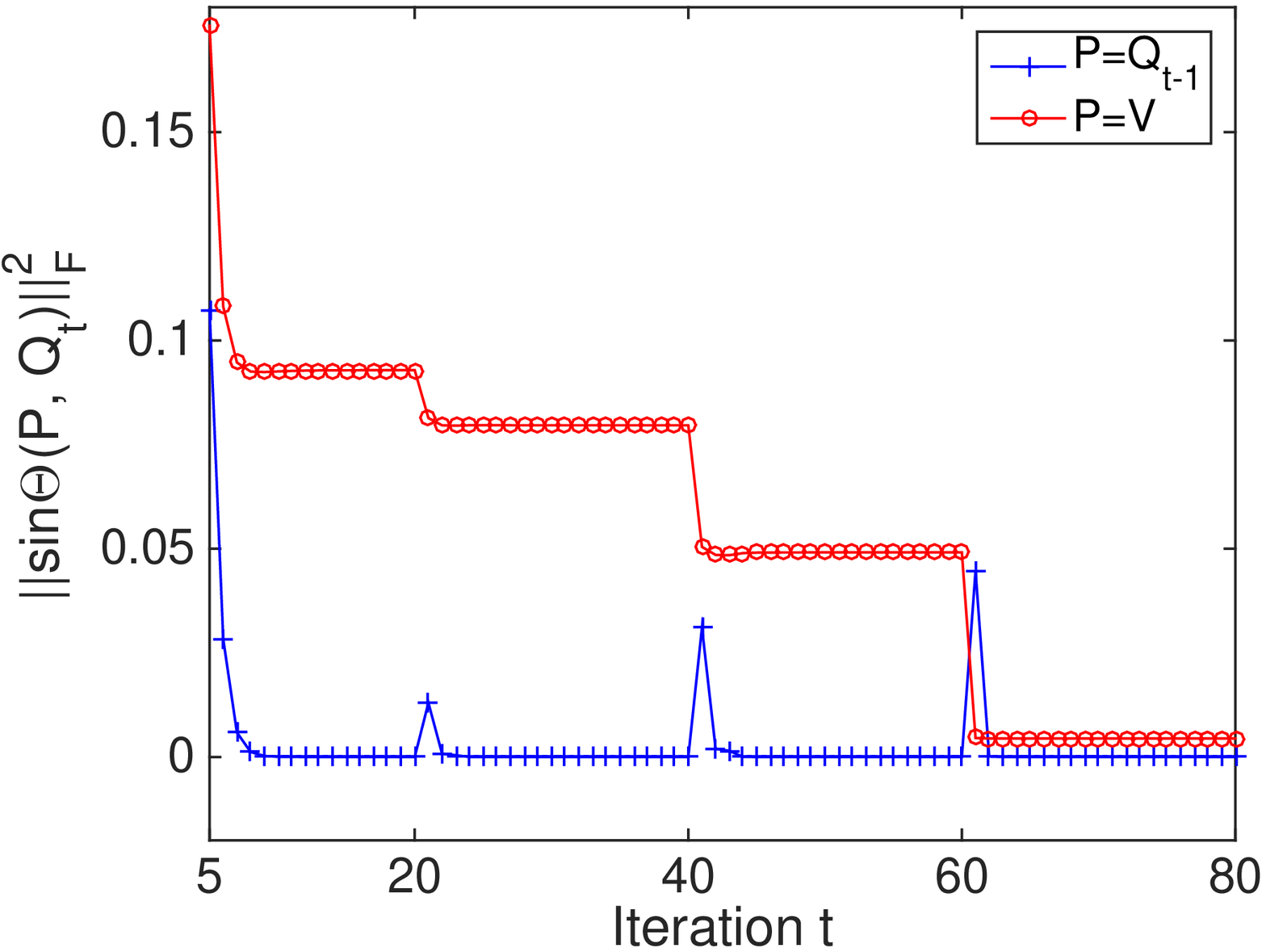}
  \includegraphics[width=9.25cm]{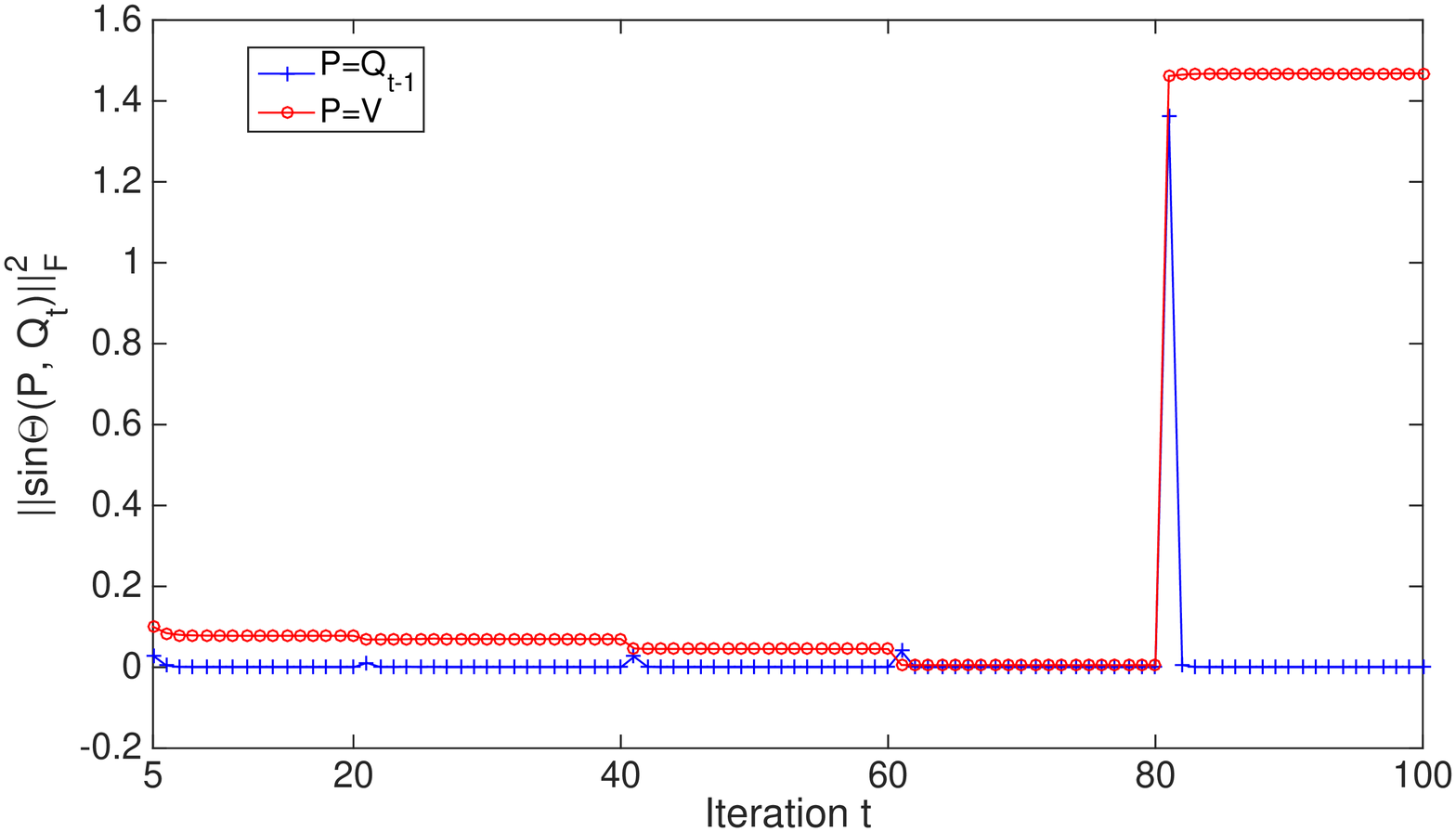}
  \caption{Effect of decreasing the truncation level on the distance between consecutive updates ($Q_t, Q_{t-1}$) and on the distance between the updated matrix and the true eigenvectors ($Q_t, V$). We use the same toy example as described in \cref{sec:sparse}, with the matrix dimension $p=100$ and the true sparsity level $\bar{k}=10$. Left: truncation level starts at $k=100$ and decreases at iteration number 21 ($k=50$), 41 ($k=25$), 61 ($k=10$). Right (over-truncation): For iteration 1-80, same as described in the Right. At iteration 81, truncation level further decreases to $k=5$.}
  \label{fig:init}
\end{figure}

\textbf{Convergence Criterion.} Since we are interested in not only recovering the eigenspace but also recovering the exact eigenvectors, the convergence criterion for the proposed algorithm is based on the size of $\|Q_t-Q_{t-1}\|_2$. Unless specified, the default threshold is set to $10^{-12}$, and the algorithm is forced to terminate after a maximum of 200 iterations.

\subsection{Comparisons on a Simulated Dataset where $\bar{A}$ is known} We first consider a simulated example where we know the true matrix $\bar{A}$ with sparse eigenvectors and we add a small perturbation matrix $E$ to it. Our goal is to recover the true sparse eigenvectors of $\bar{A}$ from the perturbed matrix $A=\bar{A}+E$. Specifically, we use a $1000\times 1000$ positive semidefinite matrix $\bar{A}=V\Sigma V^T$ with the first three columns of $V$ being sparse orthonormal vectors, and the rest columns of $V$ are randomly generated orthonormal vectors such that $V^TV = I$. We consider the following three different cases of the support set: 
\begin{itemize}
    \item Case I: the support sets for the first three eigenvectors are identical. Specifically, for $i=1,2,3$, the first 10 entries of $v_i$ are nonzero and the rest of the entries are zero.
    \item Case II: the support sets partially overlap.
    \item Case III: the support sets are completely non-overlapping. 
\end{itemize}
For all three cases, the eigenvalues are set to be $\lambda_1=1$, $\lambda_2=0.9$, $\lambda_3=0.8$, $\lambda_j=0.1$ for $j=4,\cdots, 1000$. $\rho(E)\approx 0.22$. For each case, we use a fixed $\bar{A}$ and run 1000 trials for each algorithm, and each trial is randomly initialized. We denote the true eigenvectors of $A$ as $v_1, \ v_2, \ v_3$ and the recovered eigenvectors as $u_1, \ u_2, \ u_3$. For each algorithm, we record the inner products of the true and the recovered eigenvectors averaged over \textit{all} trials. We also record the success rate, where a trial is counted as success if all three inner products are greater than 0.99. The recovery rate is measured by the recovery of the full support set, i.e. a trial is counted as a successful recovery if the recovered eigenvector has all nonzeros in the correct indices, otherwise it is counted as failure.


\begin{table}[htbp]
{
  \caption{Results on Simulated Data}\label{tab:simulated}
\begin{center}
  \begin{tabular}{|c|c|c|c|c|c|c|} \hline
   & Algorithms & $|v_1^Tu_1|$ & $|v_2^Tu_2|$ & $|v_3^Tu_3|$ & Success Rate & Recovery Rate
  \\ \hline
  \multirow{4}{6em}{Case I: Completely overlap} 
  & Standard & 0.9912 & 0.9888 & 0.9869 & 0 & 0 \\ \cline{2-7}
  & TPower & 0.9885 & 0.9501 & 0.7881 & 78.7\% & 39.2\% \\ \cline{2-7}
  & TOrth  & 0.9955 & 0.9910 & 0.9872 & \textbf{98.8\%} & 50.0\% \\ \cline{2-7}
  & TOrthT & 0.9935& 0.9910 & 0.9872 & \textbf{98.8\%} & \textbf{51.9\%} \\\hline
  \multirow{4}{6em}{Case II: Partially overlap} 
  & Standard & 0.9916 & 0.9896 & 0.9863 & 0 & 0 \\ \cline{2-7}
  & TPower & 0.9131 & 0.8252 & 0.8141 & 75.8\% & 75.8\% \\ \cline{2-7}
  & TOrth  & 0.9161 & 0.8962 & 0.9600 & \textbf{89.2\%} & 0 \\ \cline{2-7}
  & TOrthT  & 0.8971 & 0.8712 & 0.9540 & 86.8\% & \textbf{86.8\%}\\\hline
  \multirow{4}{6em}{Case III: non-overlap} 
  & Standard & 0.9915 & 0.9892 & 0.9878 & 0 & 0 \\ \cline{2-7}
  & TPower & 0.8690 & 0.7550 & 0.7569 & 66.8\% & 66.8\%\\ \cline{2-7}
  & TOrth  & 0.8470 & 0.8020 & 0.9069 & \textbf{79.4\%} & \textbf{79.4\%}\\ \cline{2-7}
  & TOrthT  & 0.8560 & 0.7960 & 0.9030 & 79.0\% & 79.0\% \\\hline
  \end{tabular}
\end{center}
}
\end{table}

The results show that {TOrth} achieves the best success rate among all three cases, and {TOrthT} achieves a comparable success rate with {TOrth} and has a better sparsity recovery rate than {TPower}. We also observe that in {TPower}, the second and the third recovered vectors $u_2, \ u_3$ have worse quality in terms of distance to the true eigenvector compared to the first recovered vector $u_1$. We explain this phenomenon by doing an error analysis for the deflation scheme. Suppose that $u_j=v_j+z$, where $z$ is the difference between the true and the recovered eigenvector. Without loss of generality, we compute the matrix after one deflation:
\[(I-u_1u_1^T)(\bar{A}+E)(I-u_1u_1^T)\]
\[=(I-v_1v_1^T)\bar{A}(I-v_1v_1^T)+C\bar{A}C+B\bar{A}C+C\bar{A}B+CEC+BEB+BEC+CEB\]
\[:=(I-v_1v_1^T)\bar{A}(I-v_1v_1^T)+E_1\]
where $B=I-v_1v_1^T$ and $C=zz^T-v_jz^T-zv_j^T$. When $u_1$ converges to $v_1$, $z$ is negligible and $C\approx \textbf{0}$. But when $u_1$ diverges, the error accumulates and the perturbation matrix $\rho(E_1)$ to the deflated matrix can be even larger than $\rho(\bar{A})$, thus causing $u_2$ harder to converge.

\subsection{PitProps Dataset}
The PitProps dataset~\cite{jeffers1967two} is one of the classical examples of PCA interpretability and a benchmark to evaluate the performance of sparse PCA algorithms. The dataset contains 180 observations of props of Corsican pine from East Anglia and 13 variables corresponding to the physical properties of the props. The first six principal components obtained by standard PCA accounts for 87\% of total variance. We compute the first six sparse loadings of the data and compare the results with other sparse PCA algorithms. Since the outputs of SPCA algorithms are not guaranteed to be uncorrelated, we measure the performances of sparse PCA algorithms by the proportion of adjusted explained variance (Prop. of AdjVar.) and the cumulative percentage of explained variance (CPEV), as explained below.

\begin{itemize}
    \item \textbf{Prop. of AdjVar.} Suppose $X$ is the data matrix and $V$ are the obtained sparse loadings. The adjusted variance~\cite{zou2006sparse} of the first $m$ principal components $Y=XV$ is computed by: \[\text{AdjVar}(V)=\sum_{j=1}^m R_{jj}^2\]
where $R$ is the upper-triangular matrix obtained by QR factorization of $Y$ and is used to de-correlate between the components. If given the covariance matrix $A=X^TX$, $R$ can be obtained by the Cholesky factorization: $R=\textbf{chol}(V^TAV)$. 

\item \textbf{CPEV.} To account for the non-orthogonality of the loading matrix, CPEV was proposed in~\cite{shen2008sparse} and uses the projection of $X$ onto the $m-$dimensional subspace spanned by the loading vectors $Z$:
\[X_m=XV(V^TV)^{-1}V^T\]
the CPEV is then computed as $\text{Trace}(X_m^T X_m)/\text{Trace}(X^TX)$.
\end{itemize}

We report the results obtained by Algorithm~\ref{alg:OrthT} and compare with the results obtained by TPower~\cite{yuan2013truncated}, GPower~\cite{journee2010generalized}, PathPCA~\cite{d2008optimal}, rSVD~\cite{shen2008sparse} and SPCA~\cite{zou2006sparse} in Table~\ref{tab:pitprops}. 
Overall, TOrthT performs on par with the other algorithms.
\begin{table}[htbp]
{
  \caption{Results on PitProps}  \label{tab:pitprops}
  \begin{center}
  \begin{tabular}{|c|c|c|c|c|} \hline
   Algorithms & Input Parameters & Output Card. & Prop. of AdjVar. & CPEV \\ \hline
  TOrthT  & $K=[7,2,4,3,5,4]$ & 25 & 0.7956 & 0.8487\\ 
  & $K=[6,2,1,2,1,1]$ & 13 & 0.7009 & 0.7528\\ \hline
  TPower &$K=[7,2,4,3,5,4]$ & 25 & 0.7913 & 0.8377\\
  & $K=[6,2,1,2,1,1]$ & 13 & 0.7003 & 0.7585\\ \hline
  GPower$_{\ell_{1}}$ & $\gamma=0.22$; see~\cite{journee2010generalized} & 25 & 0.8083 & 0.8279 \\
  & $\gamma=0.40$ & 13 & 0.7331 & 0.7599\\ \hline
  PathPCA & $K=[7,2,4,3,5,4]$ & 25 & 0.8003 & 0.8438\\ 
  & $K=[6,2,1,2,1,1]$ & 13 & 0.7202 & 0.7700\\ \hline
  rSVD$_{\ell_1}$ & see Shen and Huang~\cite{shen2008sparse} & 25 & 0.8025 & 0.8450\\ \hline
  SPCA & see Zou et al.~\cite{zou2006sparse} & 18 & 0.7575 & 0.8022\\ \hline
  \end{tabular}
\end{center}
}
\end{table}

\subsection{Denoising of Synthetic Signals} In this experiment we follow a similar setting as the denoising experiment from~\cite{jenatton2010structured} and generate signals from the following noisy linear model:
\[u_1 V^1 + u_2 V^2+ u_3 V^3 + \epsilon \in \mathbb{R}^{400}\]
where $V=[V^1, V^2, V^3]\in\mathbb{R}^{400\times 3}$ are sparse and structured dictionary elements organized on a $20\times 20$-dimensional grid and are non-overlapping, as shown in the first row of \cref{fig:synthetic}. Each dictionary element has a structured sparsity consisting of a $10\times 10$ nonzero block. The components of the noise vector $\epsilon$ are independent and identically distributed generated from a normal distribution. The linear coefficients $[u_1, u_2, u_3]$ are generated from a normal distribution:
\[[u_1, u_2, u_3] \sim \mathcal{N}\Bigg(\textbf{0}, \left[ \begin{array}{ccc} 1 & 0 & 0.5 \\ 0 & 1 & 0.5\\ 0.5 & 0.5 & 1 \end{array} \right]\Bigg).\]
We generate $n=250$ signals according to the noisy linear model, and decompose the data matrix to obtain the first three dictionary elements using standard PCA, standard PCA$+$ post truncation (only truncate at the end) and TOrth. The sparsity level is set to be the true sparsity, i.e. $K=[100,100,100]$. The results are shown in \cref{fig:synthetic}. We observe that the standard PCA and simple truncation are not able to recover the original dictionaries, while TOrth finds the structured sparsity in all three elements.
\begin{figure}[h!]
  \includegraphics[width=7.5cm]{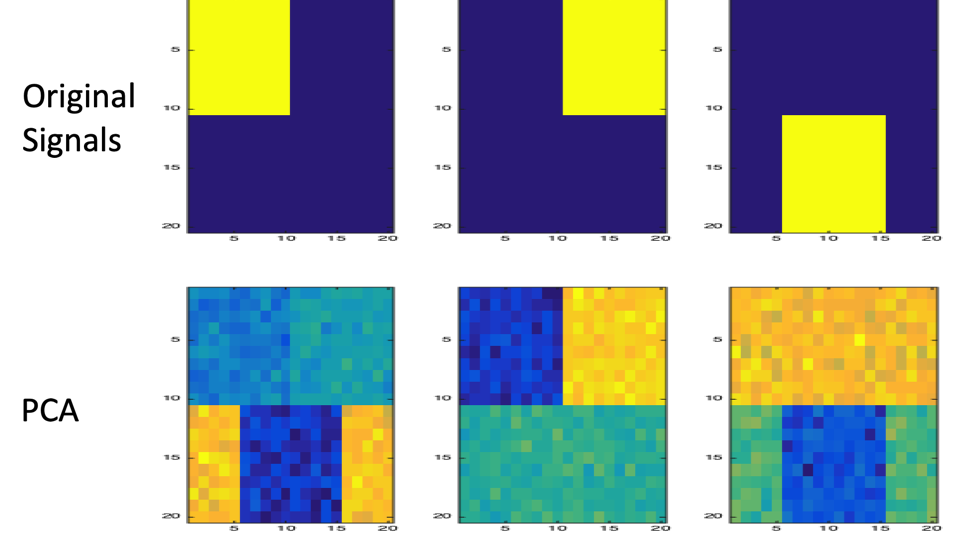}
  \includegraphics[width=7.5cm]{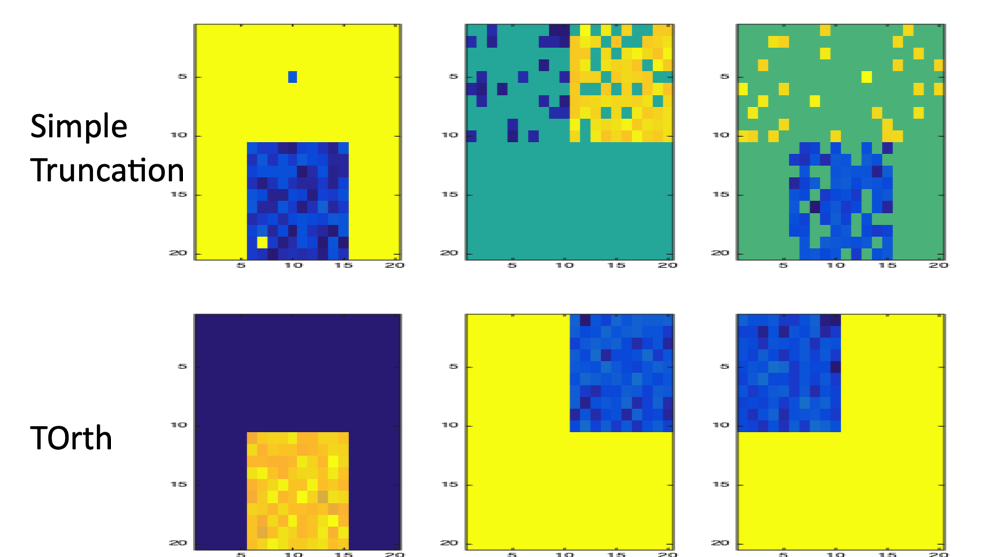}
  \caption{Original signals and signals recovered by PCA, PCA with simple truncation and TOrth.}
  \label{fig:synthetic}
\end{figure}

\subsection{Sea Surface Temperature Example}
The Sea Surface Temperature dataset (SST) records the weekly means of satellite ocean temperature data over $360 \times 180 = 64,800$ grid points from 1990 to present~\cite{reynolds2002improved}. The El Ni\~no Southern Oscillation (ENSO) is defined as any sustained temperature anomaly above running mean temperature with a duration of 9 to 24 months. The canonical El Niño is associated with a narrow band of warm water off coastal Peru, as recovered by TOrth shown in \cref{fig:SST_PC} left. Standard PCA, as shown in \cref{fig:SST_PC} right, is unable to separate this band from a global weather pattern across the Pacific and Atlantic.   

We use this example to  demonstrate the computational efficiency of our algorithm. The SST data dimension is $1455 \times 64800$, where $n=1455$ is the number of temporal snapshots and $p=64800$ is the spatial grid points in each snapshot. We compute the fourth mode that is associated with the canonical El Niño, so we set $m=4$ and compare with other block (sparse) PCA algorithms. The time complexity and actual running time are recorded in \cref{tab:SST}. Truncated SVD serves as a baseline as it computes all singular vectors and singular values and is more expensive than algorithms that only computes the leading vectors. The other approaches, although comparable to TOrth in time complexity, require longer running time since the Polar decomposition via SVD~\cite{journee2010generalized} is more expensive than the QR decomposition. The methods we compared to (GPower~\cite{journee2010generalized}, Variable Projection SPCA~\cite{erichson2020sparse}) are already among the top performers in terms of computational speed. For example, the elastic net SPCA algorithm~\cite{zou2006sparse} requires $O(np^2+p^3)$ and takes at least $\times 10$ running time compared to Variable Projection SPCA~\cite{erichson2020sparse} on the SST dataset.
\begin{figure}[h!]
  \includegraphics[width=7.4cm]{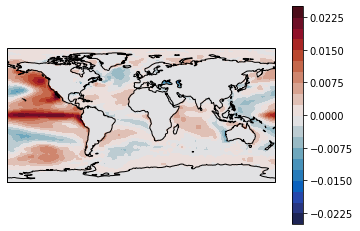}\hspace{0.25cm}
  \includegraphics[width=7.4cm]{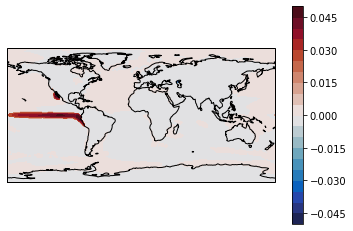}
  \caption{The fourth mode recovered by standard PCA (Left) and by TOrth (Right).}
  \label{fig:SST_PC}
\end{figure}

\begin{table}[htbp]
{
  \caption{Comparisons on Complexity and Running Time}  \label{tab:SST}
  \begin{center}
  \small{
  \begin{tabular}{|c|c|c|c|c|} \hline
    & SVD  & VarProj & GPower$_m$ & TOrth \\ \hline
    $n>p$ & $O(np^2)$  & $O(np^2)+kO(mpn+m^2p)$ & $kO(mpn+m^2p)$ & $kO(mpn+m^2p)$  \\ \hline
    $n<p$ & $O(pn^2)$  & $O(pn^2)+kO(mpn+m^2p)$ & $kO(mpn+m^2p)$ & $kO(mpn+m^2p)$ \\ \hline
   Running Time & 31.30 & 44.79 & 17.19 & 5.96 \\ \hline
  \end{tabular}
  }
\end{center}
}
\end{table}

\subsection{Classification Example on the MNIST dataset}
We apply \cref{alg:TOrth} to the MNIST handwritten digit dataset and compare the classification performance with that of standard PCA. The MNIST dataset has 70,000 samples and each samples is a 2D image with $28\times 28$ pixels,  yielding 748 features. We first use the default train-test split where 60,000 samples are used as the training set and 10,000 samples as the test set. Applying a k-nearest neighbor classifier (KNN) on the original data gives a prediction error of $3\%$, where the number of nearest neighbors is chosen to be 3 by cross-validation.

We apply standard PCA and TOrth to reduce the dimension from 748 to various subspace dimension $p$. With PCA, applying KNN on the projected test data achieves the lowest prediction error of $2.47\%$ when $p=60$, and the performance of KNN starts to decline due to the curse of dimensionality. Fixing $p=60$, results of TOrth with different $k$ are shown in \cref{tab:mnist}. We observe that by using only $k=20$ in each loading vectors and 60 loading vectors, TOrth is able to achieve a prediction error comparable to KNN applied to raw data with full dimension. \cref{fig:mnistfull} displays the top 30 loading vectors obtained by PCA and TOrth, where loading vectors obtained by TOrth captures more local features and includes fractions/strokes of digits.

\begin{table}[htbp]
{
  \caption{Prediction error on MNIST}  \label{tab:mnist}
\begin{center}
  \begin{tabular}{|c|c|c|c|c|c|c|c|c|} \hline
   k & 10 & 20 & 40 & 80 & 160 & 320 & 640 & 748 $(p)$\\ \hline
   Prediction error (\%)  & 4.26 & 2.95 & 2.78 & 2.64 & 2.63& 2.49 & 2.48 & 2.47\\ \hline
  \end{tabular}
\end{center}
}
\end{table}

\begin{figure}[h!]
  \centering
  \includegraphics[width=7cm]{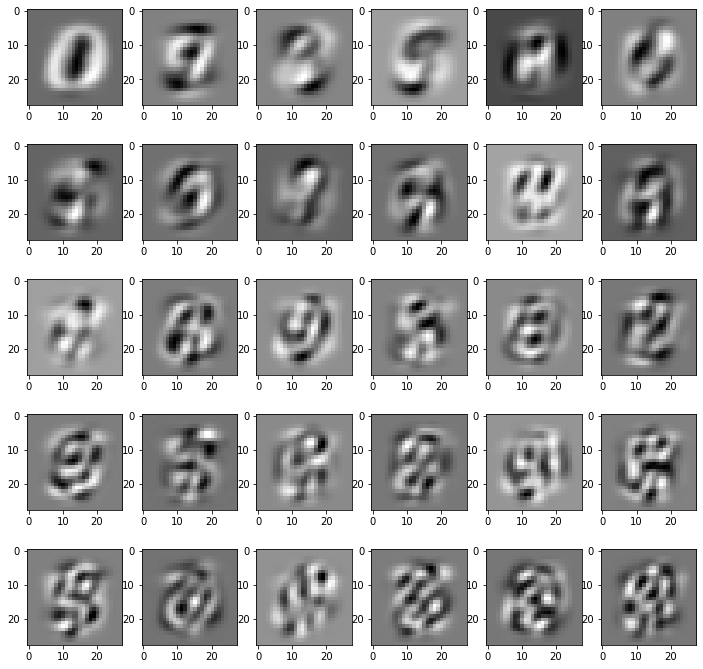}\hspace{0.3in}
  \includegraphics[width=7cm]{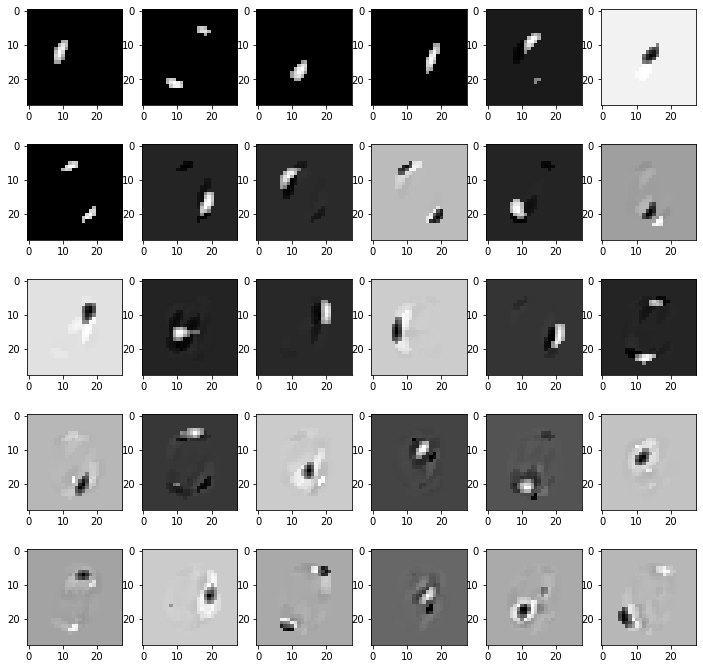}
  \caption{Top 30 loading vectors of MNIST training set. Left: loading vectors obtained by standard PCA. Right: loading vectors obtained by \cref{alg:TOrth}, with $k_i=20, \forall i$.}
  \label{fig:mnistfull}
\end{figure}

\subsection{20 newsgroup dataset} We take a sample of the 20 newsgroup dataset~\cite{lang1995newsweeder}, with binary occurrence data for 100 keywords across 16242 postings. The postings come from 4 general topics: computer, recreation, science and talk. We apply sparse PCA using {TOrthT} with sparsity level $k=10$ and the standard PCA on the centered data. \cref{tab:20ngcoeff} shows the 10 nonzero entries in the coefficient vector for the first two PCs. The keywords associated with the first PC are more relevant to religion and politics, and the keywords associated with the second PC are more relevant to computers. We also project the data onto the 2D subspace spanned by PC1 and PC2, as shown in~\cref{fig:20ng}. We observe that with the loading vectors obtained from {TOrthT}, the projections of the ``computer"-themed data are dense in PC2 and sparse in PC1, while the ``talk"-themed data projections are dense in PC1 and sparse in PC2. In contrast, projections onto the normal PCs are clustered and are dense in both directions, and lacks physical interpretations.

\begin{table}[htbp]
{
\caption{Nonzero entries in the coefficient vector obtained by TOrthT} \label{tab:20ngcoeff}
\begin{center}
\begin{tabular}{ |c|c c c c c| } 
 \hline
 PC1 & Question & Fact & Problem & Course & Case \\
 & World & God & Number & Human & Government \\ 
 \hline
 PC2 & Help & Email & Problem & System & Windows \\
 & Program & University & Computer & Software & Files \\ 
 \hline
\end{tabular}
\end{center}
}
\end{table}

\begin{figure}[h]
     \centering
     \begin{subfigure}[b]{0.4\textwidth}
         \centering
         \includegraphics[width=\textwidth]{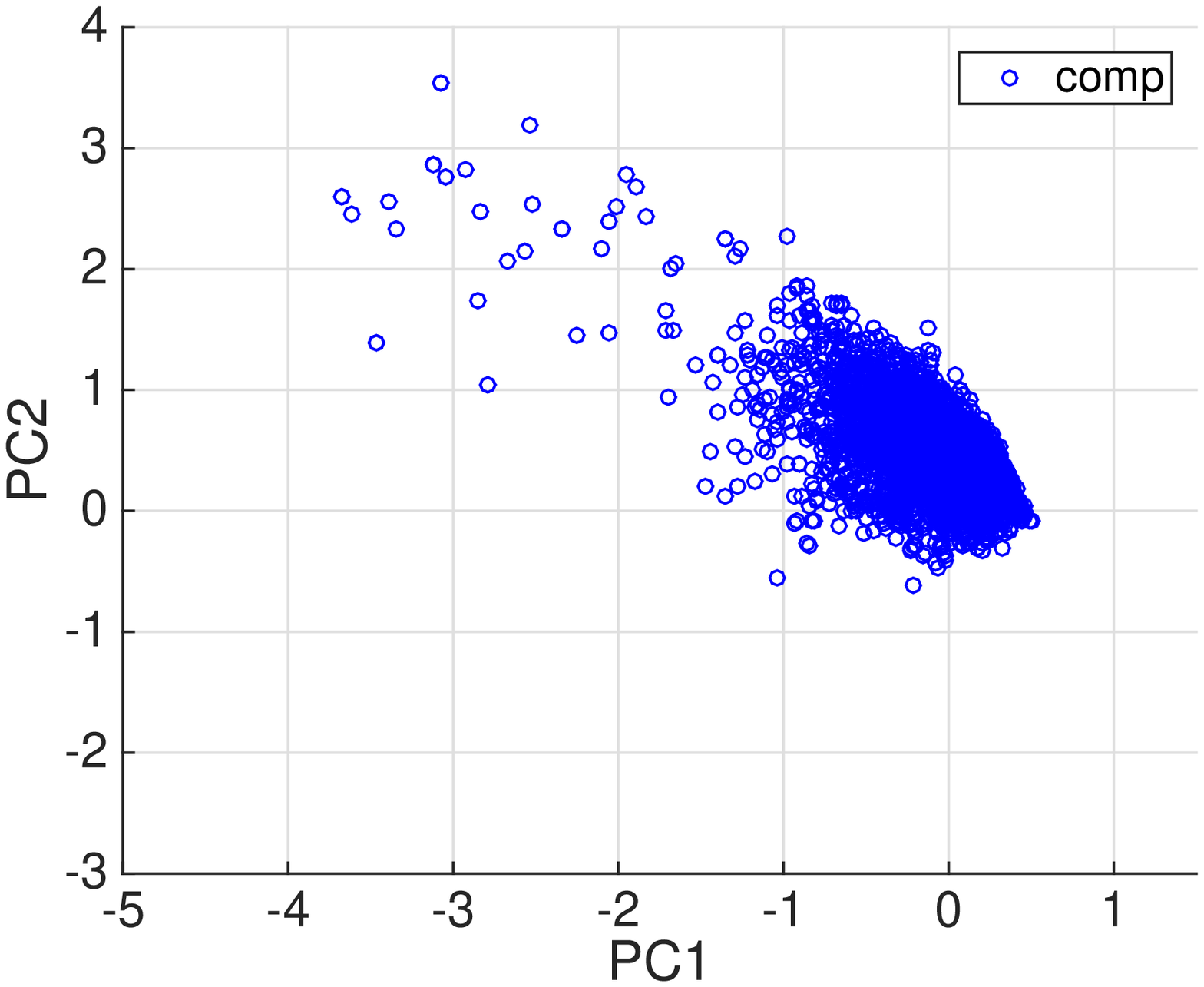}
         \caption{}
         \label{fig:comp_dense}
     \end{subfigure}
     \begin{subfigure}[b]{0.4\textwidth}
         \centering
         \includegraphics[width=\textwidth]{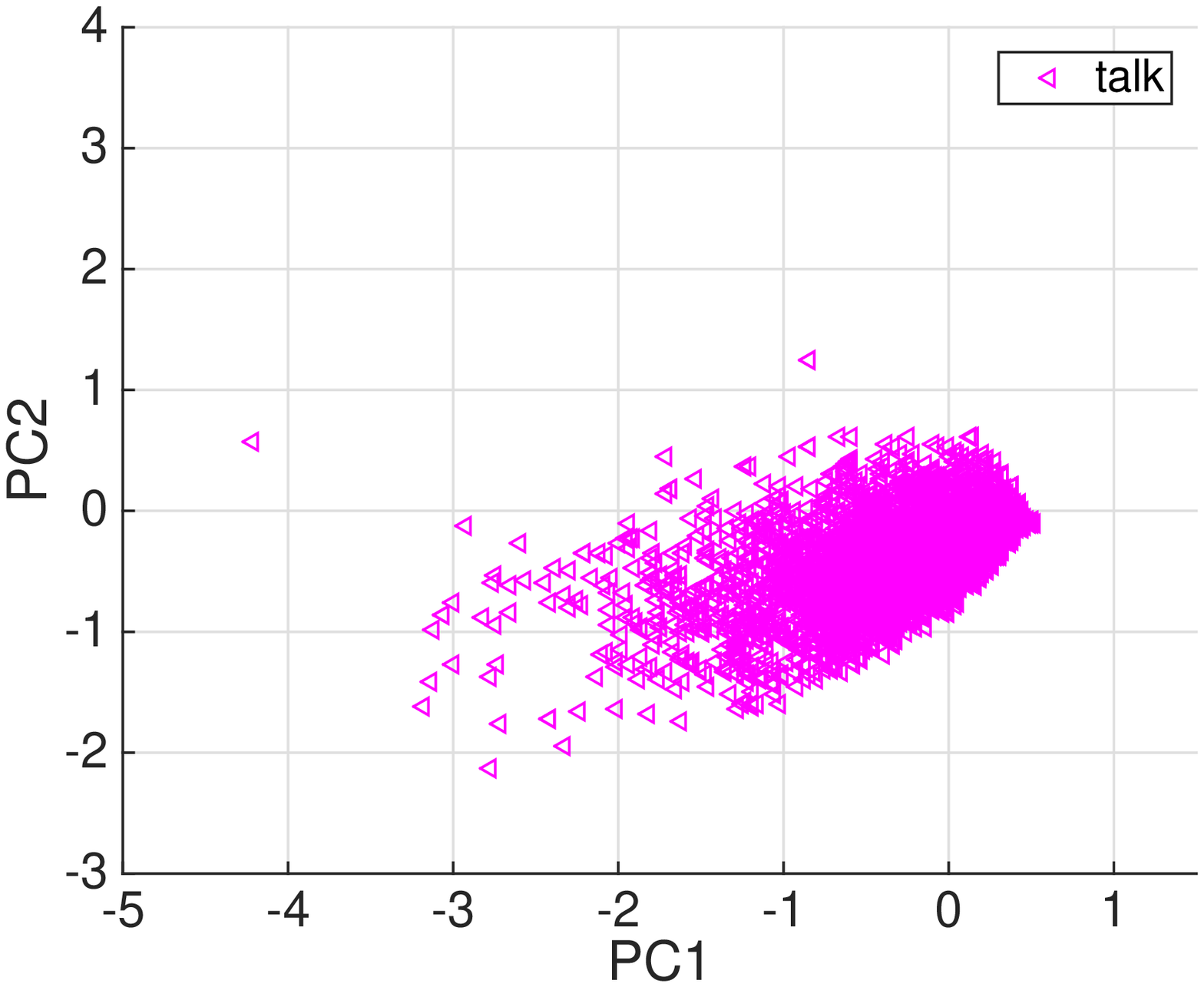}
         \caption{}
         \label{fig:talk_dense}
     \end{subfigure}
     \begin{subfigure}[b]{0.4\textwidth}
         \centering
         \includegraphics[width=\textwidth]{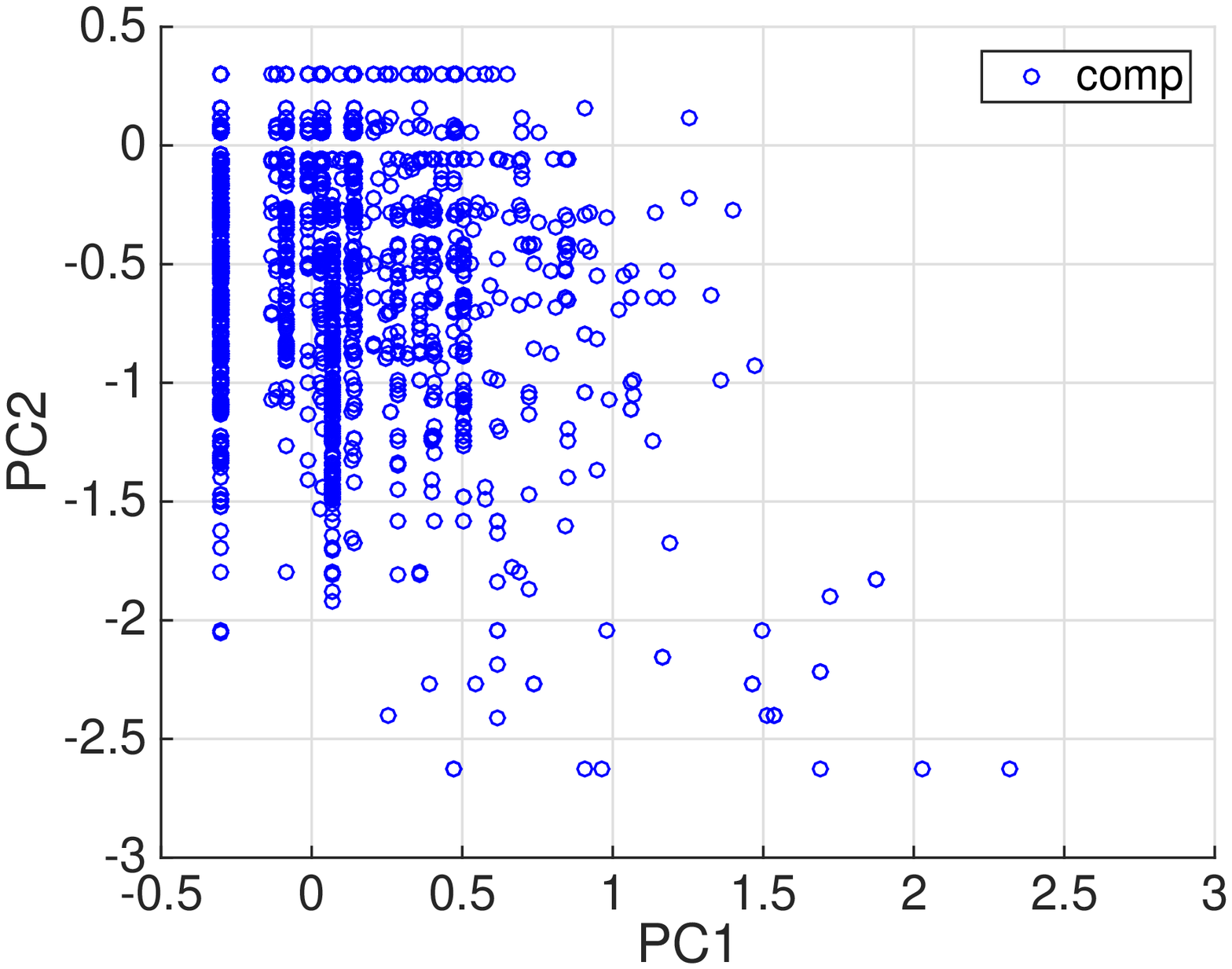}
         \caption{}
         \label{fig:comp_sparse}
     \end{subfigure}
     \begin{subfigure}[b]{0.4\textwidth}
         \centering
         \includegraphics[width=\textwidth]{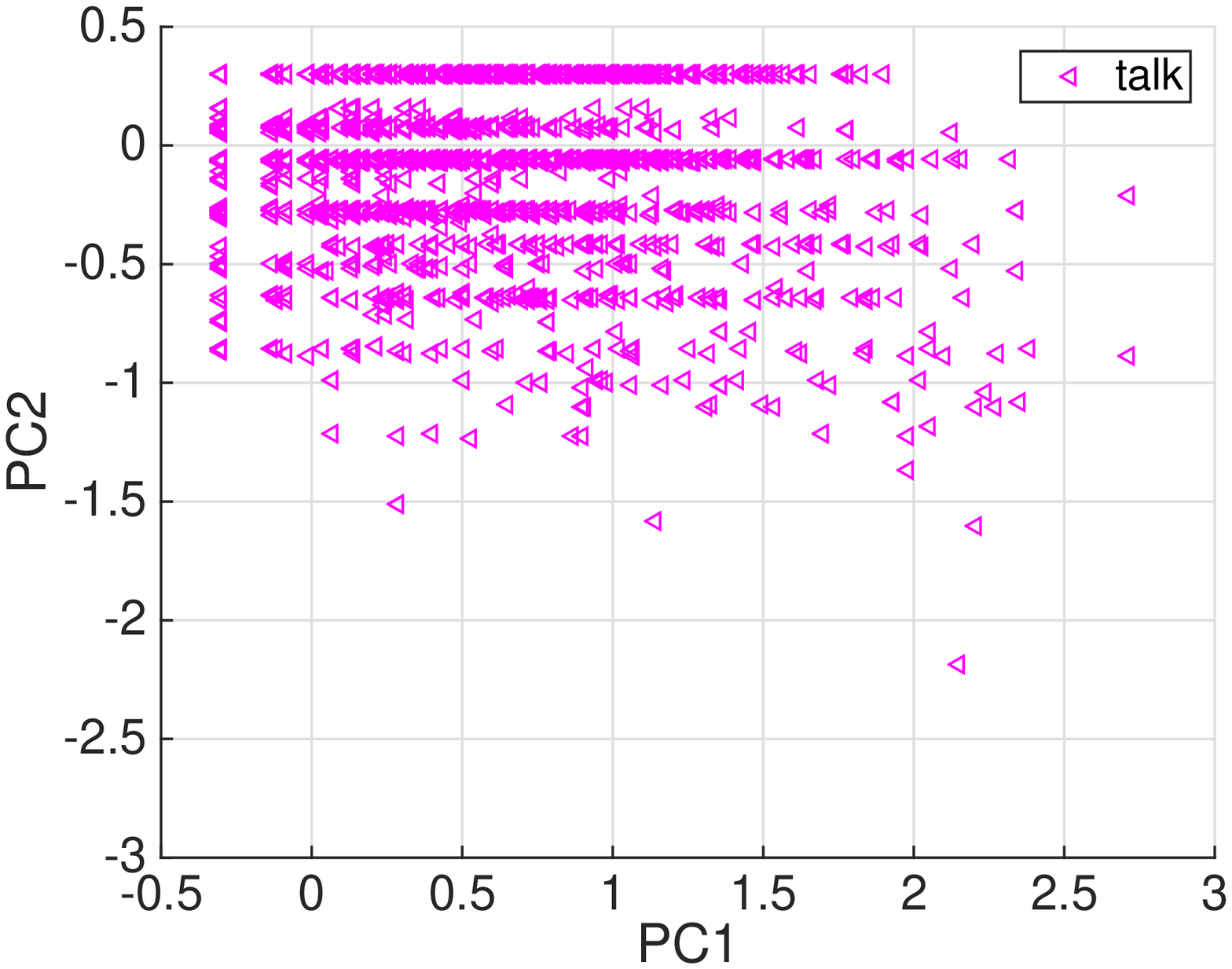}
         \caption{}
         \label{fig:talk_sparse}
     \end{subfigure}
        \caption{Projection of the ``computer" and ``talk"-themed data onto PC1 and PC2. (a) and (b): projections onto PC directions obtained by PCA. (c) and (d): projections onto PC directions obtained by {TOrthT}.}
        \label{fig:20ng}
\end{figure}

\section{Conclusions}
\label{sec:conclusions}

In this paper, we have proposed two algorithms based on the orthogonal iteration to recover sparse eigenvectors, and established convergence analyses. Our algorithms can be easily implemented and work efficiently on a wide range of data science applications, while achieving comparable or superior performance compared with existing algorithms. Compared to its single-vector counterpart, the block scheme is more robust to random initialization and achieves better accuracy and sparse recovery rate. 

There are still many open problems in this area of research. One challenge for the block approach is how to maximally preserve orthogonality while achieving sparsity. Another problem is choosing a proper initialization strategy that leads to better and faster convergence. Lastly, different truncation schemes, including probabilistic rather than deterministic approaches, can be explored to preserve the true support set and avoid truncation error in the future.

\section*{Acknowledgments}
\newpage

\section*{Appendix}
We give the proof for~\cref{eq:Fnorm} and~\cref{eq:onestep}, which is based on [Theorem 8.1.10] of~\cite{van1983matrix}.
\begin{theorem}
Let $P$ be the matrix of eigenvectors corresponding to the $m-$largest eigenvalues of $\bar{A}$. Assume $\lambda_m>\lambda_{m+1}$. Define $\gamma :=\frac{\lambda_{m+1}}{\lambda_{m}}$. Then the matrices $Q_t$ generated by the standard orthogonal iteration satisfy:
\[\|\sin\Theta (P, Q_t)\|_F \leq \gamma \frac{\|\sin\Theta (P, Q_{t-1})\|_F}{\sqrt{1-\|\sin\Theta (P, Q_{t-1})\|_2^2}},\]
assuming $\|\sin\Theta (P, Q_{t-1})\|_2<1$.
\end{theorem}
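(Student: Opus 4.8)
The plan is to adapt the classical orthogonal-iteration argument from~\cite{van1983matrix} but carry all the norm bookkeeping in the Frobenius norm, using the mixed inequality $\|AB\|_F\le\|A\|_F\|B\|_2$ from~\cref{eq:Fnorm_ineq} at the key step. Write one iteration as the QR step $Q_tR_t=\bar AQ_{t-1}$ and decompose $Q_{t-1}=PX+P^{\perp}Y$, where the columns of $P^{\perp}$ are the trailing eigenvectors of $\bar A$; then $X=P^TQ_{t-1}$, $Y=(P^{\perp})^TQ_{t-1}$, $\bar AP=P\Lambda_m$, and $\bar AP^{\perp}=P^{\perp}\Lambda'$ with $\Lambda'=\diag(\lambda_{m+1},\dots,\lambda_p)$. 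Orthonormality of $Q_{t-1}$ gives $X^TX+Y^TY=I_m$, hence $\|Xz\|^2+\|Yz\|^2=1$ for every unit $z\in\mathbb{R}^m$, while the $\sin\Theta$ identities from the Preliminaries give $\|Y\|_F=\|\sin\Theta(P,Q_{t-1})\|_F$ and $\|Y\|_2=\|\sin\Theta(P,Q_{t-1})\|_2$.

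Next I would substitute the eigendecomposition into the iteration to get $\bar AQ_{t-1}=P\Lambda_mX+P^{\perp}\Lambda'Y$, then project onto the trailing subspace: $(P^{\perp})^TQ_t=\Lambda'YR_t^{-1}$. Here $R_t$ is invertible because $\|\sin\Theta(P,Q_{t-1})\|_2<1$, together with $\lambda_m>\lambda_{m+1}\ge 0$ (so $\lambda_m>0$; for a general symmetric $\bar A$ one uses the ``largest in absolute value'' ordering and replaces $\lambda_{m+1}$ by $|\lambda_{m+1}|$ below), forces $\bar AQ_{t-1}$ to have full column rank. Since $\|\sin\Theta(P,Q_t)\|_F=\|(P^{\perp})^TQ_t\|_F$, applying~\cref{eq:Fnorm_ineq} with the Frobenius factor on $\Lambda'Y$ and the spectral factor on $R_t^{-1}$ yields
\[
\|\sin\Theta(P,Q_t)\|_F\le\|\Lambda'Y\|_F\,\|R_t^{-1}\|_2\le\lambda_{m+1}\,\|Y\|_F\,\|R_t^{-1}\|_2,
\]
where I used that every diagonal entry of $\Lambda'$ is at most $\lambda_{m+1}$ in absolute value.

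The only substantive computation left is the estimate $\|R_t^{-1}\|_2=1/\sigma_{\min}(\bar AQ_{t-1})\le\bigl(\lambda_m\sqrt{1-\|\sin\Theta(P,Q_{t-1})\|_2^2}\,\bigr)^{-1}$, which is the auxiliary bound on $\|(\bar AQ_{t-1})^{-1}\|_2$ invoked in the proof of~\cref{thm:thm1}. For a unit $z$, orthogonality of $P$ and $P^{\perp}$ gives $\|\bar AQ_{t-1}z\|^2=\|\Lambda_mXz\|^2+\|\Lambda'Yz\|^2\ge\lambda_m^2\|Xz\|^2=\lambda_m^2(1-\|Yz\|^2)\ge\lambda_m^2(1-\|Y\|_2^2)$, so $\sigma_{\min}(\bar AQ_{t-1})^2\ge\lambda_m^2(1-\|\sin\Theta(P,Q_{t-1})\|_2^2)$. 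Substituting this and $\|Y\|_F=\|\sin\Theta(P,Q_{t-1})\|_F$ into the previous display gives exactly $\|\sin\Theta(P,Q_t)\|_F\le\gamma\,\|\sin\Theta(P,Q_{t-1})\|_F/\sqrt{1-\|\sin\Theta(P,Q_{t-1})\|_2^2}$, which is the claim (and~\cref{eq:onestep}). The uniform bound~\cref{eq:Fnorm} then follows from the identical argument applied over $t$ steps at once, since $Q_tR_t\cdots R_1=\bar A^tQ_0$: replacing $\bar A,\Lambda_m,\Lambda'$ by $\bar A^t,\Lambda_m^t,(\Lambda')^t$ turns the factor $\gamma$ into $\gamma^t$ while leaving the denominator as $\sqrt{1-\|\sin\Theta(P,Q_0)\|_2^2}$. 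I do not anticipate a genuine obstacle: this is the textbook proof with the norm bookkeeping done in the Frobenius rather than the spectral norm, and the only points requiring care are applying $\|AB\|_F\le\|A\|_F\|B\|_2$ in the correct order and checking invertibility of $R_t$.
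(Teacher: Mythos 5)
Your proposal is correct and follows essentially the same argument as the paper's appendix proof: decompose along $P$ and $P^{\perp}$, use $(P^{\perp})^TQ_t=\Lambda'Y R_t^{-1}$, bound $\|R_t^{-1}\|_2$ by $\bigl(\lambda_m\sqrt{1-\|\sin\Theta(P,Q_{t-1})\|_2^2}\bigr)^{-1}$, and finish with $\|AB\|_F\le\|A\|_F\|B\|_2$. The only cosmetic difference is that you obtain the $\|R_t^{-1}\|_2$ estimate directly from $\sigma_{\min}(\bar AQ_{t-1})$ (which also makes the invertibility of $R_t$ explicit), whereas the paper writes $R_t^{-1}=X_{t-1}^{-1}\Lambda_m^{-1}X_t$ and bounds the three factors; both yield the identical bound.
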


\begin{proof}
In the standard orthogonal iteration,
\begin{equation}\label{eq:QR}
    \bar{A}Q_{t-1}=Q_t R_t.
\end{equation}
We can decompose $Q_t$ as $Q_t=PX_t+P^{\perp}Y_t$, where $P^{\perp}$ is the orthogonal complement of $P$ and its columns are the eigenvectors of $\bar{A}$ corresponding to the $m+1, \cdots p$ eigenvalues, i.e. $\bar{A}P=P\Lambda_m$, $\bar{A}P^{\perp}=P^{\perp}\Lambda'$, where $\Lambda'=\text{diag}(\lambda_{m+1}, \cdots, \lambda_p)$. We have the following equations:
\begin{equation}\label{eq:PQ}
    X_t = P^TQ_t, \quad Y_t = {P^\perp}^TQ_t \Leftrightarrow \bcm X_t\\ Y_t\ecm =\bcm P & P^{\perp} \ecm^T Q_t .
\end{equation}
By applying the thin CS decomposition \cite{van1983matrix} on~\cref{eq:PQ} we have
\begin{equation}\label{eq:minmax}
    1 = \sigma_{\min}(X_t)^2+\sigma_{\max}(Y_t)^2 ,
\end{equation}
\begin{align}
    \bcm P & P^{\perp} \ecm \bcm \Lambda_m & \ \\ \ &\Lambda'\ecm \bcm P^T\\ {P^{\perp}}^T\ecm Q_{t-1} &= Q_t R_t \text{ by~\cref{eq:QR}} ,\\
    \bcm \Lambda_m & \ \\ \ &\Lambda'\ecm \bcm P^T\\ {P^{\perp}}^T\ecm Q_{t-1} &= \bcm P^T\\ {P^{\perp}}^T\ecm Q_t R_t ,\notag\\ 
    \bcm \Lambda_m & \ \\ \ &\Lambda'\ecm \bcm X_{t-1}\\ Y_{t-1}\ecm &= \bcm X_t\\Y_t\ecm R_t \notag \\
    \Rightarrow \Lambda_m X_{t-1} = X_t R_t, &\quad \Lambda' Y_{t-1} = Y_t R_t. \label{eq:Yt}
\end{align}
Assuming $R_t$ and $X_{t-1}$ are nonsingular, we obtain the following equations from~\cref{eq:Yt}: 
\begin{align}
    Y_t &= \Lambda' Y_{t-1} R_t^{-1}, \label{eq: Yt}\\
    R_t^{-1} & = (\Lambda_m X_{t-1})^{-1} X_t = X_{t-1}^{-1} \Lambda_m^{-1}  X_t, \\
\|R_t^{-1}\|_2 &\leq \|X_{t-1}^{-1}\|_2 \|\Lambda_m^{-1}\|_2 \|X_t\|_2 \leq \frac{1}{\sqrt{1-\|Y_{t-1}\|_2^2}} \frac{1}{\lambda_m} \label{eq:RtNorm}
\end{align}
The last inequality in~\cref{eq:RtNorm} comes from~\cref{eq:minmax} and the fact that
\[\|X_{t-1}^{-1}\|_2\leq \frac{1}{\sigma_{\min}(X_{t-1})} = \frac{1}{\sqrt{1-\sigma_{\max}(Y_{t-1})^2}} .\]
Based on~\cref{eq: Yt} and \cref{eq:RtNorm}, we arrive at the bound given in \cref{eq:onestep}:
\begin{align}
    \|\sin\Theta (P,Q_t)\|_F = \|Y_t\|_F &\leq \|\Lambda'\|_2\|Y_{t-1}\|_F\|R_t^{-1}\|_2\\
    &\leq \lambda_{m+1} \cdot \|Y_{t-1}\|_F \cdot \frac{1}{\sqrt{1-\|Y_{t-1}\|_2^2}} \cdot \frac{1}{\lambda_m} \\
    &= \gamma \frac{\|\sin\Theta (P,Q_{t-1})\|_F}{\sqrt{1-\|\sin\Theta (P,Q_{t-1})\|_2^2}} .
\end{align}

Similarly, based on
\begin{equation}
    \Lambda_m^t X_{0} = X_t (R_t \cdots R1), \quad {\Lambda'}^t Y_{0} = Y_t (R_t \cdots R1) ,
\end{equation}
we can derive the bound in~\cref{eq:Fnorm}:
\begin{align}
    \|\sin\Theta (P,Q_t)\|_F = \|Y_t\|_F &\leq \|\Lambda'\|^t_2\|Y_0\|_F\|(R_t\cdots R1)^{-1}\|_2\\
    &\leq \lambda_{m+1}^t \cdot \|Y_{0}\|_F \cdot \frac{1}{\sqrt{1-\|Y_{0}\|_2^2}} \cdot \frac{1}{\lambda_m^t} \\
    &= \gamma^t \frac{\|\sin\Theta (P,Q_{0})\|_F}{\sqrt{1-\|\sin\Theta (P,Q_{0})\|_2^2}} .
\end{align}

\end{proof}
\newpage

\newpage
\bibliographystyle{siamplain}
\bibliography{list}
\end{document}